\numberwithin{equation}{section}
\newtheorem{theorem}{Theorem}[section]
\newtheorem{lemma}[theorem]{Lemma}
\newtheorem{corollary}[theorem]{Corollary}
\newtheorem{conjecture}[theorem]{Conjecture}
\theoremstyle{definition}
\newtheorem{definition}[theorem]{Definition}
\newtheorem{remark}[theorem]{Remark}
\newtheorem{example}[theorem]{Example}
\DeclareMathOperator{\Ass}{Ass}
\newcommand{\K}{\mathcal{K}}
\newcommand{\N}{\mathbb{N}}
\newcommand{\mf}{\mathfrak{m}}
\begin{document}


\title[Critical graphs and associated primes]
{A conjecture on critical graphs and 
connections to the persistence of associated primes}

\author{Christopher A. Francisco}
\address{Department of Mathematics, Oklahoma State University,
401 Mathematical Sciences, Stillwater, OK 74078}
\email{chris@math.okstate.edu}
\urladdr{http://www.math.okstate.edu/$\sim$chris}

\author{Huy T\`ai H\`a}
\address{Tulane University \\ Department of Mathematics \\
6823 St. Charles Ave. \\ New Orleans, LA 70118, USA}
\email{tai@math.tulane.edu}
\urladdr{http://www.math.tulane.edu/$\sim$tai/}

\author{Adam Van Tuyl}
\address{Department of Mathematical Sciences \\
Lakehead University \\
Thunder Bay, ON P7B 5E1, Canada}
\email{avantuyl@lakeheadu.ca}
\urladdr{http://flash.lakeheadu.ca/$\sim$avantuyl/}

\keywords{associated primes, monomial ideals}
\subjclass[2000]{13F55, 05C17, 05C38, 05E99}
\thanks{Version: April 19, 2010}

\begin{abstract}
We introduce a conjecture about constructing critically $(s+1)$-chromatic graphs
from critically $s$-chromatic graphs.  We then show how this conjecture implies that any unmixed height two square-free monomial ideal $I$ in a polynomial ring $R$, i.e., the cover ideal of a finite simple graph, has the persistence property, that is, $\Ass(R/I^s) \subseteq \Ass(R/I^{s+1})$
for all $s \geq 1$. 
To support our conjecture, we prove that the statement is true if we also assume that 
$\chi_f(G)$,  fractional chromatic number of
the graph $G$, satisfies $\chi(G) -1 < \chi_f(G) \leq \chi(G)$.  We give an
algebraic proof of this result.
\end{abstract}

\maketitle


\section{Introduction}
Let $G = (V_G,E_G)$ be a finite simple graph
with vertex set $V_G = \{x_1,\ldots,x_n\}$ and edge set $E_G$.  We say that $G$ has a {\bf $s$-coloring}
if there exists a partition $V_G = C_1 \cup \cdots \cup C_s$ such that for every $e \in E_G$,
$e \not\subseteq C_i$ for $i=1,\ldots,s$.  The minimal integer $s$ such that $G$ has
a $s$-coloring is called the {\bf chromatic number} of $G$, and is denoted $\chi(G)$.  The coloring
of graphs is one of the main branches in the field of graph theory and has
many applications to other fields. In this short note,
we propose a conjecture about the coloring of critical graphs that arose out of our
study of an algebraic question about the associated primes of powers of square-free monomial 
ideals. 

To state our conjecture, we recall the following definitions and construction.  A graph
$G$ is said to be {\bf critically $s$-chromatic} if $\chi(G) = s$ but $\chi(G \setminus x) = s-1$
for every $x \in V_G$, where $G \setminus x$ denotes the graph obtained from $G$ by removing the vertex $x$ and all edges incident to $x$. A graph that is critically $s$-chromatic for some $s$ is called {\bf critical}.   For any vertex $x_i \in V_G$,
the {\bf expansion} of $G$ at the vertex $x_i$ is the graph $G' = G[\{x_i\}]$
whose vertex set is given by $V_{G'} = (V_G \setminus \{x_i\}) \cup \{x_{i,1},x_{i,2}\}$ and
whose edge set has form
\[E_{G'} = \{\{u,v\} \in E_G ~|~ u \neq x_i ~~\mbox{and}~~ v \neq x_i\} \cup
\{\{u,x_{i,1}\},\{u,x_{i,2}\} ~|~ \{u,x_i\} \in E_G\} \cup \{\{x_{i,1},x_{i,2}\}\}.\]
Equivalently, $G[\{x_i\}]$ is formed by replacing the vertex $x_i$ with the clique
$\K_2$ on the vertex set $\{x_{i,1},x_{i,2}\}$.  For any $W \subseteq V_G$,
the {\bf expansion} of $G$ at $W$, denoted $G[W]$, 
is formed by successively expanding all the vertices of $W$ (in any order).  We propose the following conjecture:

\begin{conjecture}\label{conjecture}
Let $s$ be a positive integer, and let $G$ be a finite simple graph that is critically
$s$-chromatic.  Then there exists a subset $W \subseteq V_G$ such
that $G[W]$ is a critically $(s+1)$-chromatic graph.
\end{conjecture}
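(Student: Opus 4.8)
The plan is to recast the conjecture as a statement about induced subgraphs of the full expansion $G[V_G]$, and then to produce the required graph by a maximality argument built on top of a reduction to a minimal expansion set.

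First I would record the elementary behavior of expansions. A $\chi(G)$-coloring of $G$ extends to $G[\{x_i\}]$ by giving $x_{i,1}$ the color of $x_i$ and $x_{i,2}$ a fresh color, while any coloring of $G[\{x_i\}]$ descends to $G$ by contracting the edge $\{x_{i,1},x_{i,2}\}$; hence $\chi(G)\le\chi(G[\{x_i\}])\le\chi(G)+1$, and by iteration $\chi(G)\le\chi(G[W])\le\chi(G)+|W|$ with $\chi(G[W])$ monotone in $W$ (since $G[W']$ is an induced subgraph of $G[W]$ whenever $W'\subseteq W$: keep one copy of each vertex of $W\setminus W'$). Moreover $G$ itself is an induced subgraph of $G[W]$, so $\chi(G[W])\ge s$; deleting one copy $x_{i,1}$ of a doubled vertex from $G[W]$ gives $G[W\setminus\{x_i\}]$ (the surviving copy becomes ``un-expanded''); deleting an un-expanded vertex $x_j\notin W$ gives $(G\setminus x_j)[W]$; and conversely every induced subgraph of $G[V_G]$ is isomorphic to $G''[W'']$ for an induced subgraph $G''$ of $G$ and some $W''\subseteq V_{G''}$. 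Consequently the conjecture is equivalent to: \emph{$G[V_G]$ contains an induced critically $(s+1)$-chromatic subgraph $H$ that meets every fiber $\{x_{i,1},x_{i,2}\}$}, for then $H=G[W]$ with $W$ the set of fully doubled vertices. Such candidates exist: $\chi(G[V_G])\ge s+1$, because a proper $s$-coloring of $G[V_G]$ would assign each vertex of $G$ a $2$-element subset of $\{1,\dots,s\}$ with adjacent vertices receiving disjoint subsets, i.e.\ a homomorphism from $G$ to the Kneser graph on $2$-subsets of $\{1,\dots,s\}$, whose chromatic number is at most $s-1$, forcing $\chi(G)\le s-1$ (the case $s=1$, $G=\K_1$, $G[V_G]=\K_2$, is immediate). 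So $G[V_G]$ does contain $(s+1)$-critical induced subgraphs; the entire difficulty is to find one hitting all fibers.

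To attack this, fix $W_0\subseteq V_G$ minimal with $\chi(G[W_0])\ge s+1$. Then $\chi(G[W_0])=s+1$, and for every $x_i\in W_0$ deleting a copy $x_{i,1}$ drops $\chi$ to $\le s$ (minimality) and to $\ge\chi(G[W_0])-1=s$ (one-vertex deletion), hence to exactly $s$; thus $G[W_0]$ is already critical with respect to deleting copies of doubled vertices. What remains is to control the un-expanded vertices $x_j\notin W_0$, i.e.\ to arrange $\chi\bigl((G\setminus x_j)[W_0]\bigr)=s$ for all such $j$. Here I would invoke criticality of $G$: $\chi(G\setminus x_j)=s-1$, and in any $(s-1)$-coloring of $G\setminus x_j$ the neighbors $N_G(x_j)$ exhaust all $s-1$ colors (else $x_j$ could be colored). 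The aim is to use this freedom to recolor $(G\setminus x_j)[W_0]$ with $s$ colors; failing that, to choose a ``better'' minimal $W_0$ by an augmentation step: if some $x_j\notin W_0$ has $\chi\bigl((G\setminus x_j)[W_0]\bigr)=s+1$, pass to an $(s+1)$-critical induced subgraph of $(G\setminus x_j)[W_0]$ — which again has the form $(G\setminus D)[W'']$ with $x_j\in D$ — then reinsert a single copy $x_{i,1}$ of a deleted vertex $x_i\in D$, joined to the copies of $N_G(x_i)$ that are present, argue (using the $(s-1)$-coloring of $G\setminus x_i$) that the chromatic number stays $s+1$, and iterate. A cleaner variant of the same idea: among all $(s+1)$-critical induced subgraphs of $G[V_G]$, take one meeting the most fibers, and show that if a fiber $\{x_{i,1},x_{i,2}\}$ is missed then colors can be rerouted to reinsert $x_{i,1}$ without losing any previously met fiber.

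The main obstacle — and the reason the conjecture is open — is precisely the interaction of the two deletion types. Deleting a copy of a doubled vertex keeps us inside the family $\{\,G[W]:W\subseteq V_G\,\}$, but deleting an un-expanded vertex turns $G$ into $G\setminus x_j$, which is only $(s-1)$-chromatic and no longer an expansion of an $s$-critical graph, so the natural induction (on $|V_G|$, or on the number of deleted vertices) loses its hypothesis, and the augmentation step is exactly where one must do real work to regain control. That the choice of $W$ is genuinely delicate, and not pinned down by minimality alone, is already visible for $G=C_5$ (with $s=3$): the minimal set $W_0=\{x_1,x_2\}$ gives a $C_5[W_0]$ containing a $\K_4$ on the four copies of $x_1,x_2$, hence $4$-chromatic but \emph{not} critical, whereas the equally minimal $W_0=\{x_1,x_3\}$ does produce a critically $4$-chromatic graph. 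Thus a successful proof must either single out the correct minimal $W_0$ via a coloring or structural property of $s$-critical graphs — the route one expects to generalize the theorem that handles the case $\chi(G)-1<\chi_f(G)\le\chi(G)$ — or establish a genuine lifting lemma asserting that whenever $(G\setminus D)[W'']$ is critically $(s+1)$-chromatic, so is $G[W]$ for some $W$, and then strip $D$ down one vertex at a time. I expect the coloring-theoretic refinement of the former route to be the more promising line of attack.
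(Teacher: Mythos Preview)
The statement you are addressing is Conjecture~\ref{conjecture}, and the paper does \emph{not} prove it: it is presented explicitly as an open problem. The only general result the paper establishes is the special case $\chi(G)-1<\chi_f(G)\le\chi(G)$ (Theorem~\ref{specialcase}), proved by an algebraic route through cover ideals and Theorem~\ref{bfold}, after the combinatorial reduction of Lemma~\ref{independentlemma}: if $W$ is a \emph{maximal independent set} with $\chi(G[W])=s+1$, then $G[W]$ is automatically critically $(s+1)$-chromatic. So there is no full proof to compare your proposal against.

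Your write-up is not a proof either, and you acknowledge this. The groundwork is sound: the monotonicity of $\chi(G[W])$ in $W$, the identification of $G[V_G]$ with the $2$nd expansion, and the Kneser argument giving $\chi(G[V_G])\ge s+1$ are all correct (with the harmless imprecision that $\chi(K(s,2))=s-2$ for $s\ge 4$, stronger than your stated $\le s-1$). Your reformulation of the conjecture as ``find an $(s+1)$-critical induced subgraph of $G[V_G]$ meeting every fiber'' is clean and correct. But both proposed attacks---minimal $W_0$ plus augmentation, and maximizing the number of fibers met---stall exactly where you say they do, and your $C_5$ example already shows that minimality of $W_0$ is the wrong constraint: $\{x_1,x_2\}$ is minimal with $\chi(C_5[\{x_1,x_2\}])=4$ yet not critical. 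The paper's Lemma~\ref{independentlemma} points to a different selection principle---restrict to maximal independent $W$---and indeed in your own example the working choice $\{x_1,x_3\}$ is a maximal independent set while the failing $\{x_1,x_2\}$ is not; the paper's Remark~\ref{strongerconj} conjectures that such a $W$ always exists. The gap you identify (loss of the inductive hypothesis when deleting an un-expanded vertex) is genuine and is precisely why the conjecture is open; neither you nor the paper closes it.
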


If Conjecture \ref{conjecture} holds, then given any critically $s$-chromatic graph,
one can then construct a critically $(s+d)$-chromatic graph for any integer $d \geq 1$
by repeatedly applying the conjecture.  Note that Conjecture \ref{conjecture} is true
if we also assume that $G$ is a perfect graph since the only perfect
critically $s$-chromatic graph is $\K_s$, the clique of size $s$.  If we expand $\K_s$
at any vertex, we obtain the critically $(s+1)$-chromatic graph $\K_{s+1}$.

Conjecture \ref{conjecture} arose out of our investigations of the associated primes
of powers of the cover ideal of a graph \cite{FHVT2}. Let $k$ be a field, and let $R = k[x_1, \dots, x_n]$ be a polynomial ring over $k$. 
For any ideal $I \subseteq R$, a prime ideal $P$ is an {\bf associated prime} of $R/I$ if there
exists an element $T \in R$ such that $I:(T) = \{F \in R ~|~ FT \in I\} = P$.
The set of associated primes of $R/I$ is denoted by $\Ass(R/I)$.  
Consider the sets $\Ass(R/I^s)$ as $s$ varies. Then 
Brodmann (see \cite{Brodmann}) proved that there exists an integer $a$ such that
\[\bigcup_{s=1}^\infty \Ass(R/I^s) = \bigcup_{s=1}^a \Ass(R/I^s).\]
However, it is not true in general that 
$\Ass(R/I^s) \subseteq \Ass(R/I^{s+1})$ for all $s \geq 1$ (see
the paper of Herzog and Hibi \cite{HH} for examples
involving monomial ideals).
We say that $I$ has the {\bf persistence property} if $\Ass(R/I^s) \subseteq
\Ass(R/I^{s+1})$ for all $s \geq 1$.

Conjecture \ref{conjecture} implies the persistence property for 
the following class of ideals:

\begin{theorem} \label{maintheorem}
Suppose that $I$ is any unmixed square-free monomial ideal of height
$2$.  If Conjecture \ref{conjecture} holds for $(s+1)$, then 
$$\Ass(R/I^s) \subseteq \Ass(R/I^{s+1}).$$
In particular, if Conjecture \ref{conjecture} holds for all $s$ then $I$ has the persistence property.
\end{theorem}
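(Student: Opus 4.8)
The key dictionary is: the unmixed height-two square-free monomial ideal $I$ is the cover ideal $J(G)$ of a finite simple graph $G$ on $n$ vertices, i.e. $I = \bigcap_{\{x_i,x_j\} \in E_G} (x_i,x_j)$. I need to understand $\Ass(R/I^s)$ combinatorially. The first step is to recall (from the companion paper \cite{FHVT2} or to reprove here) the characterization of when the graded maximal ideal $\mathfrak{m} = (x_1,\dots,x_n)$ belongs to $\Ass(R/J(G)^s)$: this happens if and only if $s \geq \chi(G) - 1$, roughly speaking, because the symbolic and ordinary powers of $J(G)$ encode colorings of $G$. More precisely, a localization argument reduces an arbitrary associated prime $P = (x_{i_1},\dots,x_{i_k})$ of $R/J(G)^s$ to the statement that $\mathfrak{m}$ is associated to $R/J(G_W)^s$ where $G_W$ is the induced subgraph on $W = \{x_{i_1},\dots,x_{i_k}\}$; this is because cover ideals localize well (inverting a variable $x_j$ deletes the vertex $x_j$ from $G$). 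So the whole problem localizes to: understand when $\mathfrak{m} \in \Ass(R/J(G)^s)$.

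**The coloring criterion and the inductive step.**

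First I would establish the equivalence: $\mathfrak{m} \in \Ass(R/J(G)^s)$ iff $G$ contains an induced subgraph $H$ that is critically $(s+1)$-chromatic — or more precisely, iff there is an induced subgraph whose chromatic behavior forces $\mathfrak{m}$ in. (Recall a graph has a vertex cover "obstruction" governed by its chromatic number; the powers $J(G)^s$ versus the symbolic powers $J(G)^{(s)}$ differ exactly when colorings of expansions of $G$ come into play.) Granting such a criterion, the persistence statement $\Ass(R/I^s) \subseteq \Ass(R/I^{s+1})$ becomes the following purely combinatorial claim: if $G$ has an induced subgraph that is critically $(s+1)$-chromatic, then $G$ has an induced subgraph (of the expansion) that is critically $(s+2)$-chromatic. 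And \emph{this is exactly where Conjecture \ref{conjecture}, applied at $(s+1)$, is invoked}: starting from the critically $(s+1)$-chromatic induced subgraph $H \subseteq G$, the conjecture produces $W \subseteq V_H$ with $H[W]$ critically $(s+2)$-chromatic, and the expansion $H[W]$ appears as the relevant combinatorial object controlling $J(G)^{s+1}$ (expansions of $G$ correspond to passing between ordinary and symbolic powers, i.e. to the Rees-algebra / polarization structure of $J(G)^{s+1}$). Reassembling via the localization from the first paragraph, the associated prime $P$ of $R/I^s$ is then recovered as an associated prime of $R/I^{s+1}$.

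**The main obstacle.**

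I expect the technical heart to be the precise translation between "expansion of a graph at a set $W$" and an algebraic operation on $J(G)^{s+1}$ — i.e. showing that $\mathfrak{m} \in \Ass(R/J(G)^{s+1})$ is detected by the chromatic number of some expansion $G[W]$ rather than of $G$ itself. The symbolic power $J(G)^{(s)}$ has the clean description $\bigcap (x_i,x_j)^s$ whose membership is governed by $\chi(G)$, but the ordinary power $J(G)^s$ is strictly smaller in general, and the gap between them is precisely what expansions measure (an edge of weight two in a fractional cover corresponds to splitting a vertex into a $\mathcal{K}_2$). Making the statement "$\mathfrak{m}$ is associated to $R/J(G)^{s+1}$ iff some expansion of an induced subgraph is critically $(s+2)$-chromatic" rigorous — and checking that the conjecture's output expansion $H[W]$ feeds back correctly through the localization — is the crux; once that bridge is in place the persistence implication is a short deduction, and the parenthetical "in particular" follows by iterating the conjecture from $s$ up through $s+d$ for every $d \geq 1$.
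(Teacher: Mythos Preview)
Your outline follows essentially the same route as the paper: translate $P \in \Ass(R/J(G)^s)$ into the existence of a critically $(s+1)$-chromatic induced subgraph $H$ of the expansion $G^s$, apply Conjecture~\ref{conjecture} to $H$ to obtain a critically $(s+2)$-chromatic $H[W]$, and then read this back as a witness that $P \in \Ass(R/J(G)^{s+1})$. The paper phrases the dictionary via the irredundant irreducible decomposition (Theorem~\ref{criticalcorrespondence}) rather than via localization, but these are equivalent for monomial primes.

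However, you have correctly located the crux without resolving it, and the missing step is not a formality. The critically $(s+1)$-chromatic graph $H$ sits inside $G^s$ using $s-a_i+1$ shadows of each $x_i$ (for some $a_i \geq 1$). When the conjecture hands you $W \subseteq V_H$, let $b_i$ be the number of shadows of $x_i$ lying in $W$; then $H[W]$ uses $s-a_i+1+b_i$ shadows of $x_i$. For $H[W]$ to embed as an induced subgraph of $G^{s+1}$ you need $s-a_i+1+b_i \leq s+1$, i.e.\ $b_i \leq a_i$; and for the resulting irreducible component to have radical exactly $P$ (rather than a strictly smaller prime) you need each exponent $a_i - b_i + 1 \geq 1$, which is the same inequality. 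A priori the conjecture gives no control on $W$, so this bound is not automatic. The paper supplies the missing argument: if $b_i \geq a_i + 1$ for some $i$, then the shadows of $x_i$ in $H[W]$ already form a clique of size at least $s+2$; since $H$ is critically $(s+1)$-chromatic and the clique on the shadows of $x_i$ has only $s-a_i+1 \leq s$ vertices, $H$ must contain an edge outside that clique, so $H[W]$ contains $\K_{s+2}$ as a \emph{proper} induced subgraph, contradicting that $H[W]$ is critically $(s+2)$-chromatic. This short criticality argument is the one genuine idea your proposal lacks.

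One small correction to your first pass at the coloring criterion: ``$\mathfrak{m} \in \Ass(R/J(G)^s)$ iff $G$ contains an induced subgraph that is critically $(s+1)$-chromatic'' is not right as stated; the correct object is an induced subgraph of $G^s$ meeting every vertex-fiber (this is Theorem~\ref{criticalcorrespondence}). You hedge toward this later, but the distinction matters, since without the exponents $a_i$ in the picture the entire $b_i \leq a_i$ issue never comes into view.
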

\noindent
Any unmixed square-free monomial ideal of height two is the Alexander dual
of an edge ideal of a graph (for an introduction, see Chapter 6 of \cite{V}).
We prove Theorem \ref{maintheorem} in Section 2 by using our
results  in \cite{FHVT2} which link the irredundant
irreducible decomposition of a power of a square-free monomial ideal to critical subgraphs.

In Section 3, we prove Conjecture \ref{conjecture} if $\chi_f(G)$, the
fractional chromatic number of $G$, is ``close enough'' to $\chi(G)$.

\begin{theorem}\label{maintheorem2}
Conjecture \ref{conjecture} holds if we also assume 
$\chi(G)-1 < \chi_f(G) \leq \chi(G)$.  
\end{theorem}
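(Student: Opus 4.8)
The plan is to exploit the hypothesis $\chi(G)-1 < \chi_f(G) \le \chi(G)$ directly, since it severely restricts the structure of a critically $s$-chromatic graph $G$: the fractional chromatic number can only sit in the half-open interval $(s-1,s]$. I would first recall that for any expansion $G' = G[W]$ one has $\chi(G') \le \chi(G)+1 = s+1$, because $2$-coloring the clique $\K_2$ that replaces each vertex of $W$ and extending a proper $s$-coloring of $G$ costs at most one extra color; so the only thing that can fail is getting $\chi(G[W]) = s+1$ exactly, together with criticality. The key computation is to understand how $\chi_f$ behaves under expansion. Expanding a vertex $x_i$ into $\K_2$ is the inverse of an edge contraction, and I expect that $\chi_f(G[\{x_i\}])$ can be pinned down — or at least bounded below — in terms of $\chi_f(G)$ and the fractional structure near $x_i$; iterating over a well-chosen $W$ should push $\chi_f$ strictly above $s$, which forces $\chi(G[W]) \ge s+1$ and hence $=s+1$.

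Concretely, the steps I would carry out are: (1) State and prove the bound $\chi(G[W]) \le s+1$ for all $W$. (2) Establish a lower bound of the form $\chi_f(G[W]) \ge \chi_f(G) + \delta(W)$ for a suitable nonnegative quantity $\delta(W)$ that is positive once $W$ is large enough (for instance $W = V_G$, the total expansion, which replaces $G$ by the graph obtained by doubling every vertex — a "blow-up" whose fractional chromatic number I expect to be strictly larger than that of $G$ unless $G$ is a disjoint union of complete graphs, the perfect case already handled). Combined with $\chi_f(G) > s-1$, this yields $\chi_f(G[W]) > s$, whence $\chi(G[W]) \ge \lceil \chi_f(G[W])\rceil \ge s+1$, so $\chi(G[W]) = s+1$. (3) Pass from a graph of chromatic number $s+1$ to a critically $(s+1)$-chromatic one by deleting vertices: any graph $H$ with $\chi(H) = s+1$ contains a critically $(s+1)$-chromatic subgraph $H'$. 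The remaining point is to check that $H'$ can itself be realized as an expansion $G[W']$ for some $W' \subseteq V_G$ — i.e., that the class of expansions of $G$ is closed under the relevant vertex deletions, or that one can choose $W$ minimally so that $G[W]$ is already critical. Here I would argue that if $G[W]$ is $(s+1)$-chromatic but not critical, some vertex deletion either removes one of the two clones $x_{i,1},x_{i,2}$ — which, since the clones are twins joined by an edge, is the same as "un-expanding" at $x_i$, i.e. replacing $W$ by $W\setminus\{x_i\}$ — or removes an original vertex $x_j$, reducing to the critically $s$-chromatic subgraph question inside $G\setminus x_j$; either way we descend to a smaller instance and can induct.

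The main obstacle I anticipate is step (2): proving that total (or partial) expansion strictly increases the fractional chromatic number when $G$ is critical and not complete. Expansion replacing $x_i$ by an edge $\{x_{i,1},x_{i,2}\}$ with both clones adjacent to $N_G(x_i)$ is exactly an "inflation"/substitution of $\K_2$ into $x_i$, and there is a clean multiplicativity-type formula for $\chi_f$ under substitution of a graph $H$ into a vertex, namely $\chi_f(G \text{ with } H \text{ substituted at a vertex in a maximum clique-ish position})$ relates to $\chi_f(G)$ and $\chi_f(H) = \chi_f(\K_2) = 2$; the subtlety is that the clones are \emph{adjacent}, so this is substitution of $\K_2$, and I must verify the inequality goes the right way and is strict. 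The hypothesis $\chi_f(G) > s-1 = \chi(G) - 1$ is presumably what makes the strict increase happen — intuitively $\chi_f$ is already "nearly saturated" relative to the integer $\chi(G)$, so any blow-up tips it over the edge — and identifying the exact mechanism (likely via the dual LP / fractional clique cover, or via the tensor/lexicographic product interpretation of expansion) is the crux of the argument.
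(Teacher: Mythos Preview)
Your Step~(1) is false as stated: the bound $\chi(G[W]) \le s+1$ does \emph{not} hold for arbitrary $W \subseteq V_G$.  Take $G = \K_s$ and $W = V_G$; then $G[W]$ is the lexicographic blow-up $\K_s[\K_2] \cong \K_{2s}$, so $\chi(G[W]) = 2s$.  Your ``one extra color for all the second clones'' argument needs $\{x_{i,2} : x_i \in W\}$ to be independent in $G[W]$, which happens exactly when $W$ is independent in $G$.  This collides head-on with your Step~(2), where you propose $W = V_G$ to push $\chi_f$ up.  The paper avoids this by restricting at the outset to \emph{maximal independent} $W$; then $\chi(G[W]) \le s+1$ is genuine, and Lemma~\ref{independentlemma} shows that once $\chi(G[W]) = s+1$ the graph $G[W]$ is already critically $(s+1)$-chromatic, so your Step~(3) disappears as well.  (In particular your Case~(b) there cannot occur: since $\chi(G\setminus x_j)=s-1$ and $W$ is independent, $(G\setminus x_j)[W]$ is $s$-colorable.)

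What remains is the analog of your Step~(2): show that \emph{some} maximal independent $W$ has $\chi(G[W]) = s+1$.  Here the paper's argument is quite different from the substitution/multiplicativity mechanism you sketch.  It argues by contradiction, assuming $\chi(G[W]) = s$ for every maximal independent $W$, and works algebraically through the cover ideal $J(G)$ and the identity $\chi_b(H) = \min\{d : (\prod x_i)^{d-b} \in J(H)^d\}$ (Theorem~\ref{bfold}).  The technical core is Lemma~\ref{technicallemma}, which pushes the ideal-membership statement for $G' = G[W]$ back down to $J(G)$, giving $(x_1\cdots x_n)^{\chi_b(G')-b}/{\bf m}_W^{\,b} \in J(G)^{\chi_b(G')}$.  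Choosing $c$ with $\chi_f(G) = \chi_c(G)/c$, writing $(x_1\cdots x_n)^{\chi_c(G)-c}$ as a product of minimal covers, and taking $W$ to be the complement of one of those covers, one multiplies the two memberships together to force $\chi_{(c+1)b}(G)/((c+1)b) < \chi_f(G)$, a contradiction.  The hypothesis $\chi_f(G) > s-1$ enters precisely through the chain $\chi_f(G') \le \chi(G') = \chi(G) < \chi_f(G)+1$.  So your instinct that $\chi_f$ under expansion is the heart of the matter is right, but the actual lever is this ideal-theoretic juggling of $b$-fold colorings, not a product formula for $\chi_f$.
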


\noindent
We give an algebraic proof of Theorem \ref{maintheorem2}.  As a corollary,
all odd holes and odd antiholes, both examples 
of critical graphs, satisfy Conjecture \ref{conjecture}.

We view this paper as part of the ongoing dialogue between 
graph coloring problems and commutative algebra.  As examples
of this discussion, we point the reader to the following papers.
In \cite{SS}, Sturmfels and Sullivant identify the generators of secant ideals of the 
edge ideal of a graph $G$ in terms of the colorability of induced subgraphs of $G$
in order to give an algebraic interpretation of the Strong Perfect Graph Theorem. 
A previous paper of the authors \cite{FHVT2}, which extends results of \cite{SS}, examines
how the associated primes of the powers of the cover ideal encode coloring
information of the graph.  Moreover, this work inspired the conjecture 
of this paper.
In a different direction, Steingr\'imsson constructed a square-free monomial ideal $K_G$ in a 
polynomial ring $R$ where the monomials of $K_G$ are in one-to-correspondence with the 
colorings of the graph $G$ \cite{S}. Additionally, there is a nice CoCoA tutorial that investigates a Gr\"obner basis method for determining whether a graph can be colored with three colors \cite[Tutorial 26]{KR}. 
Connections between the chromatic number and ideals that are not necessarily monomial can be
found in \cite{Brown,DHMO}.   As well, other variations on graph coloring can be
tackled using commutative algebra, as in the paper of Miller \cite{Mi} where orthogonal colorings
of planar graphs and connections to the minimal free resolutions of monomial
ideals of $k[x,y,z]$ are studied.

\begin{remark} \label{r.generalize}
As a final comment, one can also formulate a hypergraph
version of Conjecture \ref{conjecture}, and by adapting the proof of Theorem
\ref{maintheorem}, one can remove the unmixed and height hypotheses 
from Theorem \ref{maintheorem}. Thus a proof of the hypergraph version of Conjecture~\ref{conjecture} 
would prove the persistence property for \emph{any} square-free monomial ideal. We
have decided to emphasize the 
graph theory question, at the expense of the more general discussion, in the hope
of attracting graph theorists to this problem.  As well, we have less evidence
that the hypergraph analog of Conjecture \ref{conjecture} is true.
\end{remark}

\noindent {\bf Acknowledgments.} CoCoA \cite{C} and Macaulay 2 \cite{M2}
helped inspire our results.  We would like to thank
 Bjarne Toft and  Anders Sune Pedersen 
for answering our questions about critical graphs, and
the referee for their corrections and suggestions.
The first author is partially
supported by a NSA Young Investigator's Grant and an Oklahoma State University Dean's
Incentive Grant. The second author is partially supported by the Board of Regents grant
LEQSF(2007-10)-RD-A-30 and Tulane's Research Enhancement Fund. The third author acknowledges the
support provided by NSERC.


\section{Persistence of cover ideals}

The goal of this section is to prove Theorem \ref{maintheorem}.
We begin by encoding a finite simple graph into a monomial ideal. 

\begin{definition}
Let $G$ be a finite simple graph on the vertex set $V_G = \{x_1,\ldots,x_n\}$.  The
{\bf cover ideal} of $G$ is the monomial ideal
\[J = J(G) = \bigcap_{\{x_i,x_j\} \in E_G} (x_i,x_j) \subseteq R = k[x_1,\ldots,x_n].\]
\end{definition}
From the definition, it follows that there is a one-to-one correspondence
between cover ideals of finite simple graphs (with at least one edge and no isolated vertices) and unmixed square-free monomial ideals of height two.

Recall that a subset $W \subseteq V_G$ is called a {\bf vertex cover} of $G$ if 
$e \cap W \neq \emptyset $ for every $e \in E_G$.  
A vertex cover is a {\bf minimal vertex cover}
if no proper subset is a vertex cover.  An subset $W$ is an {\bf independent
set} if $V_G \setminus W$ is a vertex cover, and is a {\bf maximal
independent set} if $V_G \setminus W$ is a minimal vertex cover.
 The name cover ideal makes sense
in light of the following lemma.

\begin{lemma}  Let $G$ be a finite simple graph.  Then
\[J(G) = (x_{i_1}\cdots x_{i_r} ~|~ W = \{x_{i_1},\ldots,x_{i_r}\} ~~\mbox{is a minimal vertex cover of $G$}).\]
\end{lemma}

Irreducible monomial ideals will play a large role in the following discussion:

\begin{definition}
A monomial ideal in $R = k[x_1,\ldots,x_n]$ of the form $\mf^{{\bf b}} = (x_i^{b_i} ~|~ b_i \geq 1)$ with ${\bf b} = (b_1, \dots, b_n) \in \N^n$ is called an {\bf irreducible ideal}.  An {\bf irreducible decomposition} of a monomial ideal $I$ is an expression of the form
\[ I = \mf^{{\bf b}_1} \cap \cdots \cap \mf^{{\bf b}_r} \]
for some vectors ${\bf b}_1,\ldots,{\bf b}_r \in \N^n$.  The decomposition
is {\bf irredundant} if none of the $\mf^{{\bf b}_i}$ can be omitted.
\end{definition}

Every monomial ideal has a unique
irredundant irreducible decomposition \cite[Theorem 5.27]{MS}.  
In our previous paper \cite{FHVT2}, we explored what information
about the graph $G$ is encoded into the irredundant irreducible decompositions of the ideals
$J(G)^s$ as $s$ varies, that is, the powers of the cover ideals.  To state our results, we need the following 
terminology.

\begin{definition} \label{expansion}
Let $G = (V_G,E_G)$ be a graph with vertices $V_G = \{x_1, \dots, x_n\}$.
For each $s$, we define the {\bf $s$-th expansion} of $G$ to be the graph
obtained by replacing each vertex $x_i \in V_G$ by a collection $\{x_{ij} ~|~ j = 1, \dots, s\}$,
and replacing $E_G$ by the edge set that consists of edges $\{x_{il_1},x_{jl_2}\}$
whenever $\{x_i,x_j\} \in E_G$ and edges
$\{x_{il}, x_{ik}\}$ for $l \not= k$. We denote this graph by $G^s$.
We call the new
variables $x_{ij}$ the {\bf shadows} of $x_i$. 
\end{definition}

Observe that in the $s$-th expansion of $G$, we are replacing each vertex of $G$
with a clique of size $s$.    The connection between the irredundant irreducible decomposition
of $J(G)^s$ and the graph $G^s$ is summarized in the following theorem.

\begin{theorem}[{\cite[Corollary 4.14]{FHVT2}}] \label{criticalcorrespondence}
Let $G$ be a finite simple graph with cover ideal $J(G)$.  The following
are equivalent:
\begin{enumerate}
\item[(i)] $(x_{i_1}^{a_{i_1}},\ldots,x_{i_r}^{a_{i_r}})$ appears in the irredundant irreducible
decomposition of $J(G)^s$
\item[(ii)] The induced subgraph of $G^s$ on the vertex set 
\[Y = \{x_{i_1,1},\ldots,x_{i_1,s-a_{i_1}+1},\ldots,x_{i_r,1},\ldots,x_{i_r,s-a_{i_r}+1}\},\]
is a critically $(s+1)$-chromatic graph.
\end{enumerate}
\end{theorem}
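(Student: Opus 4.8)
The plan is to establish the equivalence of (i) and (ii) at once by translating each into a statement about proper $s$-colourings of the induced subgraph $G^s[Y]$, and then recognising that the translation of (i) coincides with the translation of ``$G^s[Y]$ is critically $(s+1)$-chromatic.'' Throughout, write $S=\{x_{i_1},\dots,x_{i_r}\}$, set $m_\ell=s-a_{i_\ell}+1$ (so $1\le m_\ell\le s$ and $Y$ consists of the shadows $x_{i_\ell,1},\dots,x_{i_\ell,m_\ell}$ for $\ell=1,\dots,r$), fix an integer $N>s$, and let ${\bf u}\in\N^n$ be the exponent vector with $u_{i_\ell}=a_{i_\ell}-1$ and $u_j=N$ for $j\notin S$. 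First I would record the combinatorial membership criterion for powers of the cover ideal: since $J(G)$ is minimally generated by the squarefree monomials attached to the minimal vertex covers of $G$, a monomial $x^{\bf c}$ lies in $J(G)^s$ if and only if there are vertex covers $W_1,\dots,W_s$ of $G$ with $c_i\ge\#\{k:x_i\in W_k\}$ for all $i$ (it is harmless to let the $W_k$ range over all vertex covers, not only the minimal ones); replacing each $W_k$ by the complementary independent set $A_k=V_G\setminus W_k$, this reads: $x^{\bf c}\in J(G)^s$ iff $G$ has independent sets $A_1,\dots,A_s$ with $\#\{k:x_i\in A_k\}\ge s-c_i$ for all $i$. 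Because every minimal generator of $J(G)^s$ has all exponents $\le s<N$, applying this to $x^{\bf u}$ and to each $x^{{\bf u}+e_{i_\ell}}$ (with $e_i$ the $i$-th standard basis vector) ignores the coordinates outside $S$ and yields: $x^{\bf u}\notin J(G)^s$ exactly when $G$ has \emph{no} independent sets $A_1,\dots,A_s$ with every $x_{i_\ell}$ in at least $m_\ell$ of them, and, for each $\ell_0$, $x^{{\bf u}+e_{i_{\ell_0}}}\in J(G)^s$ exactly when $G$ \emph{has} independent sets $A_1,\dots,A_s$ with $x_{i_{\ell_0}}$ in at least $m_{\ell_0}-1$ of them and $x_{i_\ell}$ in at least $m_\ell$ of them for $\ell\ne\ell_0$.

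Next I would prove the dictionary between such systems of independent sets and colourings: if $Z\subseteq Y$ is the set of shadows $x_{i_\ell,1},\dots,x_{i_\ell,t_\ell}$ for some integers $0\le t_\ell\le m_\ell$, then $G^s[Z]$ admits a proper $s$-colouring if and only if $G$ has independent sets $A_1,\dots,A_s$ with each $x_{i_\ell}$ in at least $t_\ell$ of them. Indeed, from a proper colouring $c$ one puts $A_k=\{x_{i_\ell}:c(x_{i_\ell,j})=k\text{ for some }j\le t_\ell\}$: the $A_k$ are independent in $G$ because the shadows of two $G$-adjacent vertices of $A_k$ would form an edge of $G^s[Z]$ with both endpoints of colour $k$, while the $t_\ell$ shadows of a fixed $x_{i_\ell}$ form a clique of $G^s[Z]$ and so receive $t_\ell$ distinct colours, whence $x_{i_\ell}$ lies in exactly $t_\ell$ of the $A_k$; conversely, given such $A_1,\dots,A_s$ one colours the shadows of each $x_{i_\ell}$ injectively into $\{k:x_{i_\ell}\in A_k\}$, and properness follows from independence of the $A_k$ together with injectivity on each shadow-clique. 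Instantiating this at $Z=Y$ and at $Z=Y\setminus\{x_{i_{\ell_0},m_{\ell_0}}\}$ and combining with the previous paragraph gives
\[
x^{\bf u}\notin J(G)^s\iff\chi(G^s[Y])\ge s+1,\qquad x^{{\bf u}+e_{i_{\ell_0}}}\in J(G)^s\iff\chi\!\left(G^s[Y]\setminus x_{i_{\ell_0},m_{\ell_0}}\right)\le s .
\]

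To finish, I would invoke the standard description of the irredundant irreducible decomposition of a monomial ideal (see \cite{MS}): the ideal $(x_{i_1}^{a_{i_1}},\dots,x_{i_r}^{a_{i_r}})$ appears in the irredundant irreducible decomposition of $J(G)^s$ precisely when $x^{\bf u}\notin J(G)^s$ while $x^{{\bf u}+e_{i_\ell}}\in J(G)^s$ for all $\ell$ — the role of choosing $N>s$ (larger than every exponent of a minimal generator of $J(G)^s$) being that the coordinates of ${\bf u}$ outside $S$ then behave as ``$+\infty$.'' By the displayed equivalences this becomes: $\chi(G^s[Y])\ge s+1$ and $\chi(G^s[Y]\setminus x_{i_\ell,m_\ell})\le s$ for every $\ell$. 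Since any two shadows of the same $x_{i_\ell}$ are interchanged by an automorphism of $G^s[Y]$, the latter family of conditions is equivalent to $\chi(G^s[Y]\setminus y)\le s$ for \emph{every} $y\in Y$; and because deleting a vertex lowers the chromatic number by at most one, these conditions together say exactly that $G^s[Y]$ is critically $(s+1)$-chromatic. This proves (i) $\Leftrightarrow$ (ii).

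I expect the main obstacle to be the first paragraph together with the monomial-ideal input of the last one: one must pin down the membership criterion for the \emph{ordinary} powers $J(G)^s$ (which for non-bipartite $G$ do not coincide with the symbolic powers $\bigcap_{\{x_i,x_j\}\in E_G}(x_i,x_j)^s$) and check carefully that the corner-monomial test ``$x^{\bf u}\notin J(G)^s$ and $x^{{\bf u}+e_{i_\ell}}\in J(G)^s$ for all $\ell$'' genuinely detects irredundant irreducible components, with the variables outside $S$ treated correctly. Once this is set up, the passage between $s$-fold systems of independent sets in $G$ and proper $s$-colourings of induced subgraphs of $G^s$ is routine, and the only remaining subtlety is the reduction—using the symmetry among the shadows of a vertex—of the criticality requirement (deletion of \emph{any} vertex) to the $r$ particular deletions that occur above.
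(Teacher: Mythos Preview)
The paper does not actually prove this theorem; it is quoted verbatim as \cite[Corollary~4.14]{FHVT2} and used as a black box. So there is no in-paper proof to compare against.

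That said, your argument is correct and self-contained, and it is worth recording what it does. The three ingredients you isolate are exactly the right ones: (a) the membership criterion ``$x^{\mathbf c}\in J(G)^s$ iff there exist independent sets $A_1,\dots,A_s$ with $\#\{k:x_i\in A_k\}\ge s-c_i$ for all $i$'', which follows directly from the description of $J(G)$ by minimal vertex covers; (b) the bijection between proper $s$-colourings of an induced subgraph of $G^s$ and $s$-tuples of independent sets of $G$ satisfying the appropriate multiplicity constraints; and (c) the standard corner-monomial test for irredundant irreducible components. Your verification of (c) is right: with $u_j=N>s$ for $j\notin S$ the coordinates outside $S$ are inert, and one only needs to test $x^{\mathbf u+e_{i_\ell}}\in J(G)^s$ for $\ell=1,\dots,r$, since enlarging an exponent by one dominates both ``increase an exponent'' and ``drop a variable'' among the proper irreducible sub-ideals of $(x_{i_1}^{a_{i_1}},\dots,x_{i_r}^{a_{i_r}})$. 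The final reduction via the shadow-permuting automorphism of $G^s[Y]$, from the $r$ specific deletions $x_{i_\ell,m_\ell}$ to deletion of an arbitrary vertex, is also correct. One small point you state without justification is $1\le m_\ell\le s$, i.e.\ $a_{i_\ell}\le s$; this does hold because every minimal generator of $J(G)^s$ has all exponents at most $s$, so irredundancy forces each $a_{i_\ell}\le s$.

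The proof in \cite{FHVT2} is set up in the broader context of hypergraphs and generalized Alexander duality (the result for graphs being a corollary), whereas your argument is direct and avoids that machinery; the trade-off is that your proof is shorter and more transparent for graphs, while theirs yields the hypergraph statement alluded to in Remark~\ref{r.generalize}.
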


For any monomial ideal $I$,  $(x_{i_1},\ldots,x_{i_r}) \in \Ass(R/I)$
if and only if there exists an irreducible ideal of the form
$(x_{i_1}^{a_{i_1}},\ldots,x_{i_r}^{a_{i_r}})$ in the irredundant irreducible decomposition
of $I$.  We come to the main result of this section, which shows that if Conjecture~\ref{conjecture} is true, unmixed height
two square-free monomial ideals have the persistence property.  

\begin{theorem} \label{persistence-graphs}
Let $G$ be a finite simple graph with cover ideal $J =J(G)$. Let $s \ge 1$ and assume that Conjecture \ref{conjecture} holds for $(s+1)$. Then
\[\Ass(R/J^s) \subseteq \Ass(R/J^{s+1}).\]
In particular, if Conjecture \ref{conjecture} holds for all $s$ then $J$ has the persistence property.
\end{theorem}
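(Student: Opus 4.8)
The plan is to use Theorem~\ref{criticalcorrespondence} to translate the containment $\Ass(R/J^s) \subseteq \Ass(R/J^{s+1})$ into a statement about producing critically $(s+2)$-chromatic induced subgraphs of $G^{s+1}$ from critically $(s+1)$-chromatic induced subgraphs of $G^s$. Start with $P = (x_{i_1},\ldots,x_{i_r}) \in \Ass(R/J^s)$. By the remark preceding the theorem, $P$ corresponds to an irreducible ideal $(x_{i_1}^{a_{i_1}},\ldots,x_{i_r}^{a_{i_r}})$ in the irredundant irreducible decomposition of $J^s$, and by Theorem~\ref{criticalcorrespondence} the induced subgraph $H$ of $G^s$ on the vertex set $Y = \{x_{i_t,1},\ldots,x_{i_t,s-a_{i_t}+1} : t=1,\ldots,r\}$ is critically $(s+1)$-chromatic. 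The goal is to find a new irreducible ideal $(x_{i_1}^{b_{i_1}},\ldots,x_{i_r}^{b_{i_r}})$ in the irredundant irreducible decomposition of $J^{s+1}$ (same variable support, which gives back $P \in \Ass(R/J^{s+1})$), equivalently a critically $(s+2)$-chromatic induced subgraph $H'$ of $G^{s+1}$ whose vertex support, after projecting $x_{ij}\mapsto x_i$, lands exactly on $\{x_{i_1},\ldots,x_{i_r}\}$.

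The bridge to Conjecture~\ref{conjecture} is the observation that $G^{s+1}$ is obtained from $G^s$ by expanding: adding one more shadow $x_{i,s+1}$ for each vertex $x_i$ turns the clique on the shadows of $x_i$ from $\K_s$ into $\K_{s+1}$, which is precisely a composition of vertex expansions in the sense of the introduction. Thus any expansion $H[W]$ of $H$ at a subset $W$ of shadows occurring in $H$ can be realized as an induced subgraph of $G^{s+1}$: expanding a shadow $x_{i,j}$ of $H$ introduces a new vertex adjacent to the neighbors of $x_{i,j}$ and to $x_{i,j}$ itself, and in $G^{s+1}$ the vertex $x_{i,s+1}$ has exactly this adjacency pattern relative to the other shadows of $x_i$ and to shadows of neighbors of $x_i$. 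Since $H$ is critically $(s+1)$-chromatic, Conjecture~\ref{conjecture} applied with the integer $s+1$ (this is the hypothesis ``Conjecture~\ref{conjecture} holds for $(s+1)$'') yields $W \subseteq V_H$ with $H[W]$ critically $(s+2)$-chromatic. Taking $H' := H[W]$ as an induced subgraph of $G^{s+1}$, we have a critically $(s+2)$-chromatic induced subgraph whose vertices are shadows of the same original vertices $x_{i_1},\ldots,x_{i_r}$, so by the reverse direction of Theorem~\ref{criticalcorrespondence} there is a corresponding irreducible component of $J^{s+1}$ with support $\{x_{i_1},\ldots,x_{i_r}\}$, hence $P \in \Ass(R/J^{s+1})$.

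I expect the main obstacle to be the bookkeeping that makes the identification ``$H[W]$ is an induced subgraph of $G^{s+1}$'' precise. One must check: (a) that expanding a single shadow $x_{i,j}$ in $H$ produces a graph isomorphic to an induced subgraph of $H$ together with one fresh shadow of $x_i$ from the $(s+1)$-st level, with the correct edges to other shadows of $x_i$ (the new $\K_{s+1}$-edges) and to shadows of neighbors of $x_i$; (b) that performing several such expansions, possibly expanding the same original vertex $x_i$ more than once, still lands inside $G^{s+1}$ — here one needs $W$ to contain at most one shadow of each $x_i$, or else one must argue that the conjecture can be applied so as to respect this constraint. A clean way around (b) is to note that expanding two shadows of $x_i$ in $H$ is the same as, in $G^{s}$, first passing to the induced subgraph and then expanding; iterating vertex expansions of $H$ is additive, and the total number of shadows of $x_i$ used never needs to exceed what is available after moving to $G^{s+\ell}$ for suitable $\ell$ — but since we only increase $s$ by one, the argument must show one extra shadow per vertex suffices. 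This is exactly the content of the subtle point that the conjecture's $W$ can be taken to involve each original vertex at most once, which follows because expanding a shadow and then re-expanding one of its two copies is, up to isomorphism, an expansion at a single vertex of a larger clique; I would isolate this as a short lemma before assembling the proof.

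\begin{proof}[Proof of Theorem~\ref{persistence-graphs}]
Let $P = (x_{i_1},\ldots,x_{i_r}) \in \Ass(R/J^s)$. By the discussion preceding the theorem, there is an irreducible ideal $\mf = (x_{i_1}^{a_{i_1}},\ldots,x_{i_r}^{a_{i_r}})$ appearing in the irredundant irreducible decomposition of $J^s$. By Theorem~\ref{criticalcorrespondence}, the induced subgraph $H$ of $G^s$ on
\[
Y = \{x_{i_1,1},\ldots,x_{i_1,s-a_{i_1}+1},\ldots,x_{i_r,1},\ldots,x_{i_r,s-a_{i_r}+1}\}
\]
is critically $(s+1)$-chromatic. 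Since Conjecture~\ref{conjecture} holds for $(s+1)$, there is $W \subseteq V_H$ with $H[W]$ critically $(s+2)$-chromatic. As explained above, $H[W]$ is realized as an induced subgraph of $G^{s+1}$ supported on shadows of $x_{i_1},\ldots,x_{i_r}$, so by the reverse implication of Theorem~\ref{criticalcorrespondence} there is an irreducible ideal with variable support $\{x_{i_1},\ldots,x_{i_r}\}$ in the irredundant irreducible decomposition of $J^{s+1}$. Hence $P \in \Ass(R/J^{s+1})$, proving $\Ass(R/J^s) \subseteq \Ass(R/J^{s+1})$. The last sentence follows by applying this for every $s$.
\end{proof}
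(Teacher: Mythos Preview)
Your overall strategy matches the paper's exactly, and you have correctly isolated the only nontrivial step: showing that the expansion $H[W]$ produced by the conjecture can be realized as an induced subgraph of $G^{s+1}$ on shadows of the \emph{same} set $\{x_{i_1},\ldots,x_{i_r}\}$. But your resolution of that step is where the gap lies.

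You phrase the needed constraint as ``$W$ contains at most one shadow of each $x_i$'' and propose to obtain it by a lemma to the effect that repeated expansions at shadows of the same $x_i$ can be collapsed to a single expansion. That claim is not correct: expanding two shadows of $x_i$ adds two new vertices to the clique on the shadows of $x_i$, not one, and the resulting graph is genuinely different from a single expansion. Nothing in the conjecture lets you move to a different $W$ while preserving critical $(s+2)$-chromaticity, so you cannot simply ``normalize'' $W$ this way. In your formal proof you then write ``As explained above, $H[W]$ is realized as an induced subgraph of $G^{s+1}$,'' but the preceding discussion never actually establishes this.

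The paper handles the obstacle differently and more directly. Letting $b_i$ be the number of shadows of $x_i$ in $W$, one proves $b_i \le a_i$ (not $b_i\le 1$) by a criticality argument: if $b_i \ge a_i+1$, then the shadows of $x_i$ in $H[W]$ form a clique of size $(s-a_i+1)+b_i \ge s+2$, while $H$ (being critically $(s+1)$-chromatic with that clique of size $\le s$) must contain at least one further edge, which survives in $H[W]$; thus $H[W]$ would contain $\K_{s+2}$ as a \emph{proper} induced subgraph, contradicting that $H[W]$ is critically $(s+2)$-chromatic. The bound $b_i \le a_i$ is exactly what is required: it gives $(s-a_i+1)+b_i \le s+1$ shadows of $x_i$, so $H[W]$ is isomorphic to the induced subgraph of $G^{s+1}$ on
\[
Y' = \{x_{i_t,1},\ldots,x_{i_t,\,s-a_{i_t}+b_{i_t}+1} : t=1,\ldots,r\},
\]
and the corresponding irreducible component of $J^{s+1}$ is $(x_{i_1}^{a_{i_1}-b_{i_1}+1},\ldots,x_{i_r}^{a_{i_r}-b_{i_r}+1})$ with all exponents $\ge 1$. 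Replace your proposed lemma with this short criticality argument and the proof is complete.
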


\begin{proof} Let 
$P = (x_{i_1},\ldots,x_{i_r})\in \Ass(R/J^s)$.  To simplify our notation, we can
assume, after relabeling, that $P = (x_1,\ldots,x_r)$.  Thus,
there exist positive integers $a_1,\ldots,a_r$ such that 
$(x_1^{a_1},\ldots,x_r^{a_r})$ appears in the irredundant irreducible decomposition
of $J^s$.   By Theorem \ref{criticalcorrespondence}, this means that the 
induced graph of $G^s$ on the vertex set 
\[Y = \{x_{1,1},\ldots,x_{1,s-a_{1}+1},\ldots,x_{r,1},\ldots,x_{r,s-a_{r}+1}\}\]
is a critically $(s+1)$-chromatic graph.  Let $H = (G^s)_Y$ denote this
induced subgraph.

Since we are assuming that Conjecture \ref{conjecture} holds for $(s+1)$, we can
find a subset $W \subseteq Y$ such that the expansion of $H$ at $W$,
that is, the graph $H[W]$, is a critically $(s+2)$-chromatic graph.  
For each $i=1,\ldots,r$, let $b_i$ denote the number of shadows of $x_i$
that are contained in $W$.  

We claim that $0 \leq b_i \leq a_i$ for each $i$.  Indeed, suppose that
$W$ contains $b_i \geq a_i+1$ shadows of $x_i$.  The induced graph
of $H$ on $\{x_{i,1},\ldots,x_{i,s-a_i+1}\}$ is a clique of size $s-a_i+1$.
If we expand $H$ at the $b_i$ shadows of $x_i$ in $W$, we end up with
a clique of size $b_i + s-a_i+1 \geq a_i+1 + s-a_i+1 = s+2$, i.e., 
$H[W]$ will contain a clique of size at least $s+2$ as an induced subgraph.
On the other hand, because $H$ is a critically $(s+1)$-chromatic graph,
and because the induced graph on $\{x_{i,1},\ldots,x_{i,s-a_i+1}\}$ has size at
most $s-a_i+1 \leq s < s+1$, the graph $H$ has at least one other edge.  But
then this edge (or a shadow of this edge) also belongs to $H[W]$ and does not belong to the 
clique of size of $s+2$ in $H[W]$.  We now have a contradiction, since
$H[W]$ is a critically $(s+2)$-chromatic graph that contains a $\K_{s+2}$
as a proper induced subgraph.

Now consider the induced subgraph of $G^{s+1}$ on the vertex set
\[Y' = \{x_{1,1},\ldots,x_{1,s-a_{1}+b_1+1},\ldots,x_{r,1},\ldots,x_{r,s-a_{r}+b_r+1}\}.\]
By comparing the constructions of $(G^{s+1})_{Y'}$ and $H[W]$, one can verify
that these two graphs are isomorphic.  Thus $(G^{s+1})_{Y'}$ is a critically $(s+2)$-subgraph
of $G^{s+1}$.
Theorem \ref{criticalcorrespondence} then implies that the irreducible ideal 
\[(x_1^{a_1-b_1+1},\ldots,x_r^{a_r-b_r+1})\]
appears in the irredundant irreducible decomposition of $J(G)^{s+1}$.  Since
$0 \leq b_i \leq a_i$ for each $i$, we have $a_i-b_i + 1 \geq 1$ for each $i$.
Hence $(x_1,\ldots,x_r) \in \Ass(R/J^{s+1})$, as desired.
\end{proof}

\begin{remark}
In \cite{FHVT2}, we proved that $J = J(G)$ has the persistence property 
if $G$ is perfect.  Our proof relies heavily on the fact that
Conjecture \ref{conjecture} is true for perfect critical graphs.
\end{remark}


\section{Special cases of the conjecture}

In this section, we prove that Conjecture \ref{conjecture} holds for all $s$ 
if the fractional chromatic number of $G$ is ``close enough'' to $\chi(G)$.
We begin with a lemma that shows that we can prove Conjecture \ref{conjecture} 
if there exists a maximal independent set $W \subseteq V_G$
such that $\chi(G[W]) = \chi(G) +1$.

\begin{lemma}  \label{independentlemma}
Suppose that $G$ is a critically $s$-chromatic graph and $W$ is
a maximal independent set such that $\chi(G[W]) = s+1$.  Then
$G[W]$ is critically $(s+1)$-chromatic.
\end{lemma}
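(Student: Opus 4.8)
The plan is to verify the two defining conditions of a critically $(s+1)$-chromatic graph directly: that $\chi(G[W]) = s+1$ (which is given) and that deleting any vertex of $G[W]$ drops the chromatic number to $s$. Since $\chi(G[W]\setminus v) \geq \chi(G[W]) - 1 = s$ automatically, the real content is to exhibit, for each vertex $v$ of $G[W]$, a proper $s$-coloring of $G[W]\setminus v$. The vertices of $G[W]$ come in two flavors: the original vertices lying outside $W$ (call them the \emph{untouched} vertices), and the pairs of shadows $x_{i,1}, x_{i,2}$ replacing each $x_i \in W$.

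First I would handle the removal of a shadow vertex, say $x_{i,2}$ for some $x_i \in W$. Note that $G[W]\setminus x_{i,2}$ is naturally isomorphic to $G[W \setminus \{x_i\}]$, since a vertex whose shadow pair has been collapsed back to a single vertex looks exactly like an unexpanded vertex, and $x_{i,1}$ can be identified with $x_i$. Now I would argue that $\chi(G[W \setminus \{x_i\}]) = s$: on one hand it is at least $s$ because $G$ is a subgraph of $G[W\setminus\{x_i\}]$ wait — that is false, so instead I would use that $G[W\setminus\{x_i\}]$ retracts onto $G$ (identify the two shadows of every remaining expanded vertex), giving $\chi \geq \chi(G) = s$; on the other hand, I would build a proper $s$-coloring of $G[W\setminus\{x_i\}]$ from an $s$-coloring of $G$: since $W$ is independent in $G$, all of $W$ receives colors freely, and when we expand a vertex $y\in W$ into two shadows we give one shadow the old color of $y$ and the other shadow a color not used on $N_G(y)$, which exists because $y$ had at least one free color among the $s$ (as $\deg$ considerations or criticality may be invoked). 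This shows the shadow-deletion case works, and in fact shows more, which motivates the untouched case.

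Next I would handle the removal of an untouched vertex $v \notin W$. Here $G[W]\setminus v = (G\setminus v)[W]$ (expansion commutes with deleting a vertex not in $W$), and since $G$ is critically $s$-chromatic we have $\chi(G\setminus v) = s-1$. Take an $(s-1)$-coloring of $G\setminus v$; I would then show it extends to an $s$-coloring of $(G\setminus v)[W]$ by the same expansion trick: each $y \in W$, when split into two shadows, keeps its color on one shadow and takes a color avoiding $N(y)$ on the other, and with $s$ colors available and only $s-1$ used there is always room. This produces the required $s$-coloring of $G[W]\setminus v$.

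The main obstacle I anticipate is the bookkeeping in the shadow-deletion case: making precise the isomorphism $G[W]\setminus x_{i,2} \cong G[W\setminus\{x_i\}]$ and, more delicately, proving the lower bound $\chi(G[W\setminus\{x_i\}]) \geq s$ cleanly — the retraction argument is the natural tool but needs care since expansion does not contain $G$ as a subgraph. A secondary subtlety is ensuring that when we split a vertex $y\in W$ into shadows we genuinely have a spare color for the second shadow; here I would lean on the fact that $W$ is an \emph{independent} set (so distinct vertices of $W$ never constrain each other) together with the criticality of $G$, and if necessary argue vertex-by-vertex in a fixed order so that each split only ever competes with already-colored neighbors. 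Once these points are nailed down, both deletion cases yield an $s$-coloring, and combined with the hypothesis $\chi(G[W]) = s+1$ this is exactly the statement that $G[W]$ is critically $(s+1)$-chromatic.
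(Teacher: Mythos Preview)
Your case split and your treatment of the untouched-vertex case are correct and match the paper: delete $v\notin W$, take an $(s-1)$-coloring of $G\setminus v$ from criticality, and give every second shadow the fresh $s$-th color (this works precisely because $W$ is independent, so the second shadows are pairwise nonadjacent).

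The shadow case, however, has a real gap. You propose to start from an \emph{arbitrary} $s$-coloring of $G$ and, for each $y\in W\setminus\{x_i\}$, find a color avoiding $N_G(y)\cup\{y\}$ for the second shadow. That color need not exist. Concretely, take $G=C_7$ with the $3$-coloring $1,2,3,1,2,3,2$ and $W=\{x_1,x_3,x_5\}$: then $x_3$ together with its neighbors already uses all three colors, so there is no spare. Your suggested remedies do not close this: degree considerations in a critically $s$-chromatic graph give $\deg(y)\ge s-1$, which is exactly the wrong direction, and processing vertices of $W$ one at a time changes nothing, since the constraint on each second shadow involves only $N_G(y)\cup\{y\}$ and is independent of the other splits.

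The fix is to run the \emph{same} idea you used for the untouched case. Removing the shadow $x_{i,2}$ leaves a graph isomorphic to $G[W\setminus\{x_i\}]$, in which $x_i$ is an ordinary (unexpanded) vertex. Criticality gives an $(s-1)$-coloring of $G\setminus x_i$; use it on all vertices except $x_i$ and the second shadows, then assign the new color $s$ to $x_i$ \emph{and} to every second shadow $x_{j,2}$ with $j\in W\setminus\{x_i\}$. Independence of $W$ guarantees that $x_i$ and these second shadows are pairwise nonadjacent and that each of their neighbors was colored from $\{1,\dots,s-1\}$. This is exactly the paper's argument; note it uses criticality at $x_i$ rather than at an arbitrary vertex.

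Two minor remarks. First, the lower bound $\chi(G[W]\setminus v)\ge s$ is automatic from $\chi(G[W])=s+1$, as you yourself say at the outset, so the retraction detour is unnecessary. Second, your retraction map is not actually a graph homomorphism: the edge $\{x_{j,1},x_{j,2}\}$ would map to a loop at $x_j$, so that argument would fail even if it were needed.
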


\begin{proof}
It suffices to prove that $G[W]$ is \emph{critically} $(s+1)$-chromatic since
we are given $\chi(G[W]) = s+1$. After relabeling the vertices,
we can assume that $W = \{x_1,\ldots,x_r\}$ and $V_G \setminus W = \{x_{r+1},\ldots,x_n\}$.
Thus, the vertices of $G[W]$ are 
\[V_{G[W]} = \{x_{11},x_{12},\ldots,x_{r1},x_{r2},x_{r+1},\ldots,x_n\}.\]

Fix an $x_j \in \{x_{r+1},\ldots,x_n\}$, and set $H = G \setminus x_j$.
The $H[W] = G[W]\setminus x_j$.  We can color $G \setminus x_j$ with $s-1$ colors,
and in particular, we use the $s-1$ colors to color the induced subgraph of
$H[W]$ on $\{x_{11},\ldots,x_{r1},x_{r+1},\ldots,x_n\} \setminus \{x_j\}$. We then
color the vertices $\{x_{12},\ldots,x_{r2}\}$ with the $s$-th color.  Since
$W$ is an independent set, we can color all of these vertices the same color.
But this means that $H[W] = G[W] \setminus x_j$ is $s$-colorable.

So, now fix an $x_{j1} \in \{x_{11},x_{21},\ldots,x_{r1}\}$.  We will show
that we can color $G[W] \setminus x_{j1}$ with $s$ colors.  If we remove
$x_j$ from $G$, we can color $G \setminus x_j$ with $s-1$ colors.  This
gives us a $(s-1)$-coloring of the vertices of $\{x_{11},\ldots,x_{r1},x_{r+1},\ldots,x_n\} \setminus
\{x_{j1}\}$.  The independent set $\{x_{12},\ldots,x_{r2}\}$ in $G[W] \setminus x_{j1}$
can now be colored with the $s$-th color.  A similar argument
works if $x_{j2} \in \{x_{12},x_{22},\ldots,x_{r2}\}$ is removed.
\end{proof}

\begin{remark} Suppose $G$ is critically $s$-chromatic, and let $W$ be a non-maximal independent set in $G$; that is, there exists a vertex $x$ so that $W \cup \{x\}$ is an independent set. It can be seen that $\chi(G \backslash x) = s-1$, so by assigning the $s$-th color to $x$ and shadows of the vertices in $W$, we have $\chi(G[W]) = s$. This says that for $G[W]$ to be critically $(s+1)$-chromatic, $W$ cannot be a non-maximal independent set. On the other hand, the proof of Lemma \ref{independentlemma} suggests
that the easiest way to prove Conjecture \ref{conjecture} is to show the 
existence of a maximal independent set $W \subseteq V_G$ with the property
that $\chi(G[W]) = \chi(G) +1$.  Our initial hope was that any maximal
independent set, or at the very least, any maximum independent
set (a maximal independent set with largest possible cardinality), would have the desired property.  The following two examples, due to Bjarne Toft and  Anders Sune Pedersen, show that this is not true.
\end{remark}

\begin{example}
Let $G = C_9$, the cycle on $V_G = \{x_1,\ldots,x_9\}$.  Consider the maximal independent
set $W = \{x_2,x_5,x_8\}$.  When we construct the graph $G[W]$, the resulting
graph has $\chi(G[W]) = \chi(G) = 3$.
\end{example}

\begin{example}
As in the previous example, we begin with the cycle $C_9$ on the vertex
set $V_G = \{x_1,\ldots,x_9\}$ which is a critically $3$-chromatic
graph.  We now apply the construction of Mycielski \cite{M} to make
a new graph $G'$ as follows:  let $V_{G'} = 
\{x_1,\ldots,x_9,y_1,\ldots,y_9,z\}$ where $z$ is adjacent to $y_1,\ldots,y_9$,
$y_i$ is adjacent to the neighbors of $x_i$ for each $i=1,\ldots,9$, and the induced
graph on $\{x_1,\ldots,x_9\}$ is the original graph $G= C_9$. The new graph $G'$
has the property that it is critically $4$-chromatic.  Furthermore, in this new graph
$G'$, the set $W = \{y_1,\ldots,y_9\}$ is a maximum independent set.

We claim that the graph $G'[W]$ has the property that $\chi(G'[W]) = \chi(G') = 4$.
To see this, color the vertices $x_1,\ldots,x_9$ in the following order:
red, blue, green, red, green, orange, red, orange, blue.  One can color
vertices $\{y_{11},y_{12},\ldots,y_{91},y_{92}\}$ using only the colors green, blue
and orange.  Finally, one colors the vertex $z$ red.

Using Macaulay 2 \cite{M2}, we calculated the irredundant irreducible decomposition
of the $J^4$, where $J = J(G')$ is cover ideal of $G'$. (Computing
this decomposition will take about an hour on a standard laptop.) One of the
irredundant irreducible ideals appearing in the decomposition is the ideal
\[ (x_1^3,x_2^4,x_3^3,x_4^4,x_5^3,x_6^4,x_7^3,x_8^4,x_9^4,y_1^3,y_2^4,y_3^3,y_4^4,y_5^3,y_6^4,y_7^3,y_8^4,y_9^4,z^4).\]
By Theorem \ref{criticalcorrespondence}, the induced graph on the vertex set
\[\{x_{11},x_{12},x_{21},x_{31},x_{32},x_{41},x_{51},x_{52},x_{61},x_{71},x_{72},x_{81},x_{91},\]
\[
y_{11},y_{12},y_{21},y_{31},y_{32},y_{41},y_{51},y_{52},y_{61},y_{71},y_{72},y_{81},y_{91},z_{11}\}\]
in the graph $(G')^2$ is a critically $5$-chromatic graph.  This induced graph
is isomorphic to the graph one obtains by expanding $G'$ at the vertex set $Z = \{x_1,x_3,x_5,x_7,
y_1,y_3,y_5,y_7\}$, which is a maximal independent set.  
So, although expanding $G'$ at one maximum independent set
does not result in graph whose chromatic number increases, at another maximal independent set we will have this property, i.e., $\chi(G'[Z])=\chi(G')+1$.
\end{example}

\begin{remark}\label{strongerconj}
The above example suggests that a stronger version of Conjecture \ref{conjecture}
may hold, namely, one can pick the set $W$ so that
$W$ is also a {\it maximal independent set}.
\end{remark}

We now give an algebraic proof that Conjecture \ref{conjecture} is true 
provided that the fractional chromatic number of $G$ is ``close enough'' to 
the chromatic number of $G$.  We begin by recalling some needed definitions
and results.

\begin{definition} A {\bf $b$-fold coloring} of a graph $G$ is an assignment
to each vertex a set of $b$ distinct colors such that adjacent vertices
receive disjoint sets of colors.  The minimal number of colors
needed to give $G$ a $b$-fold coloring is called the {\bf $b$-fold
chromatic number} of $G$ and is denoted $\chi_b(G)$. 
\end{definition}

When $b = 1$, then $\chi_b(G) = \chi(G)$.  The following result, which appeared
in \cite{FHVT2}, relates the value of $\chi_b(G)$ to an ideal membership
question.

\begin{theorem}\label{bfold}
Let $G$ be a finite simple graph on $V = \{x_1,\dots,x_n\}$ with
cover ideal $J$. Then \[\chi_b(G) = \min\{d ~|~ (x_1\cdots x_n)^{d-b} \in J^d \}.\]
\end{theorem}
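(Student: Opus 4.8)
The plan is to show that the ideal membership $(x_1\cdots x_n)^{d-b} \in J^d$ holds if and only if $G$ admits a $b$-fold coloring with $d$ colors; the stated formula for $\chi_b(G)$ then drops out by minimizing over $d$ (noting that both sides of the equivalence force $d \geq b$, so the exponent $d-b$ is nonnegative, and that when $G$ has no edges both conditions reduce to $d=b$). The conceptual bridge is the classical complementary relationship between colorings and vertex covers: a color ``missing'' at a vertex corresponds to that vertex being placed in a cover.

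First I would handle the direction from colorings to ideal membership. Given a $b$-fold coloring with color set $[d]$, for each color $c$ set $S_c = \{x_i \in V_G : c \text{ is not assigned to } x_i\}$. If $\{x_i,x_j\} \in E_G$ then the color sets of $x_i$ and $x_j$ are disjoint, so $c$ lies in at most one of them, hence $x_i \in S_c$ or $x_j \in S_c$; thus each $S_c$ is a vertex cover and the monomial $\prod_{x_i \in S_c} x_i$ lies in $(x_i,x_j)$ for every edge, so it lies in $J$. Since each vertex is assigned exactly $b$ of the $d$ colors, it lies in exactly $d-b$ of the sets $S_c$, whence $\prod_{c=1}^{d}\bigl(\prod_{x_i \in S_c} x_i\bigr) = (x_1\cdots x_n)^{d-b}$, exhibiting $(x_1\cdots x_n)^{d-b}\in J^d$.

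Next I would reverse the argument. Since $J$ is generated by the squarefree monomials $x^T = \prod_{x_i\in T}x_i$ with $T$ a minimal vertex cover of $G$, the power $J^d$ is generated by the products $x^{T_1}\cdots x^{T_d}$ over $d$-tuples of minimal vertex covers, and a monomial lies in a monomial ideal precisely when it is divisible by one of its generators. So $(x_1\cdots x_n)^{d-b}\in J^d$ yields vertex covers $T_1,\dots,T_d$ with $x^{T_1}\cdots x^{T_d}\mid (x_1\cdots x_n)^{d-b}$, i.e.\ each $x_i$ lies in at most $d-b$ of the $T_c$, equivalently $x_i\notin T_c$ for at least $b$ values of $c$. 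Assign to each $x_i$ an arbitrary $b$-element subset of $\{c : x_i\notin T_c\}$. For an edge $\{x_i,x_j\}$, the cover property of each $T_c$ forces $x_i\in T_c$ or $x_j\in T_c$, so $\{c : x_i\notin T_c\}$ and $\{c : x_j\notin T_c\}$ are disjoint, and hence so are the chosen $b$-sets; this is a $b$-fold coloring of $G$ with $d$ colors, so $\chi_b(G)\leq d$.

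The only delicate point I anticipate is the ``exactly versus at least'' bookkeeping between the two directions: going from a coloring to the ideal one must use that each vertex receives \emph{exactly} $b$ colors so that the product of cover monomials equals $(x_1\cdots x_n)^{d-b}$ on the nose rather than a proper multiple, while going back one must accommodate a vertex lying in strictly fewer than $d-b$ of the $T_c$, which is harmless because the candidate color set can simply be truncated to size $b$. Once this is in place, taking the minimal such $d$ in the equivalence gives $\chi_b(G) = \min\{d : (x_1\cdots x_n)^{d-b}\in J^d\}$.
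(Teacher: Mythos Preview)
Your argument is correct. The paper does not actually prove Theorem~\ref{bfold}; it simply quotes the result from the authors' earlier work \cite{FHVT2}, so there is no in-paper proof to compare against. What you have written is precisely the standard argument underlying this statement: the complementary correspondence between $b$-fold colorings with $d$ colors and factorizations of $(x_1\cdots x_n)^{d-b}$ into $d$ vertex-cover monomials, using that $J = \bigcap_{\{x_i,x_j\}\in E_G}(x_i,x_j)$ so that a monomial lies in $J$ exactly when its support is a vertex cover. Your handling of the ``exactly versus at least'' asymmetry between the two directions is also correct and is the only point where care is needed.
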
 

If follows directly from the definition 
that the $b$-fold chromatic number has the {\bf subadditivity property}, that is, $\chi_{a+b}(G) \le \chi_a(G) + \chi_b(G)$ for all $a$ and $b$. Thus, we can define the following invariant of $G$ (cf. \cite[Section 3.1 and Appendix 4]{SU}):

\begin{definition}  The {\bf fractional chromatic number} of a graph $G$, denoted $\chi_f(G)$,
is defined by
\[\chi_f(G) := \lim_{b \rightarrow \infty} \frac{\chi_b(G)}{b} = \inf_b \frac{\chi_b(G)}{b}.\]
\end{definition}

Note that $\chi_f(G) \le \dfrac{\chi_a(G)}{a}$ for all $a$. Moreover, by \cite[Corollary 1.3.2]{SU}, there always exists a number $b$ such that $\chi_f(G) = \dfrac{\chi_b(G)}{b}$.  This enables us to prove the following case of Conjecture \ref{conjecture}.

\begin{theorem} \label{specialcase}
Let $G$ be a finite simple graph that is
critically $s$-chromatic.  Suppose that $\chi(G) -1 < \chi_f(G) \leq \chi(G)$.
Then there exists a set $W \subseteq V_G$ such
that $\chi(G[W])$ is critically $(s+1)$-chromatic.
\end{theorem}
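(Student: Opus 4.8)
The plan is to use Theorem~\ref{bfold} together with the hypothesis relating $\chi_f(G)$ and $\chi(G)$ to pin down a specific expansion $G[W]$ whose chromatic number jumps, and then invoke Lemma~\ref{independentlemma} to upgrade ``chromatic number increases'' to ``critically $(s+1)$-chromatic.'' Concretely: since $G$ is critically $s$-chromatic we have $\chi(G)=s$, so the hypothesis reads $s-1<\chi_f(G)\le s$. By the remark following the definition of $\chi_f$, there is an integer $b$ with $\chi_f(G)=\chi_b(G)/b$; combined with $s-1<\chi_b(G)/b\le s$ this gives $b(s-1)<\chi_b(G)\le bs$. The subadditivity $\chi_{b+1}(G)\le\chi_b(G)+\chi_1(G)=\chi_b(G)+s$ shows $\chi_{b+1}(G)\le bs+s=(b+1)s$, and I expect one also wants a lower bound forcing $\chi_{b+1}(G)>bs$ (using that $\chi_f(G)\le\chi_{b+1}(G)/(b+1)$ gives $\chi_{b+1}(G)\ge\lceil(b+1)\chi_f(G)\rceil$; since $\chi_f(G)>s-1$ this pushes $\chi_{b+1}(G)$ past $bs$ once one checks the arithmetic). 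The point of these inequalities is to locate, via Theorem~\ref{bfold}, a power $d$ and a square-free monomial membership/non-membership relation $(x_1\cdots x_n)^{d-b}\in J^d$ but $(x_1\cdots x_n)^{d-b}\notin J^{d-1}$ or similar, which by Theorem~\ref{criticalcorrespondence} produces an irreducible component of $J(G)^{d-1}$ of a controlled shape.

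From the irreducible-component data I would extract, via Theorem~\ref{criticalcorrespondence}, a critically $d$-chromatic induced subgraph of an expansion $G^{m}$, and then translate this back—exactly as in the proof of Theorem~\ref{persistence-graphs}, where an induced subgraph of $G^{s+1}$ was recognized as an expansion $H[W]$—into the statement that $G[W]$ has $\chi(G[W])=s+1$ for a suitable $W\subseteq V_G$. The combinatorial heart is the identification $(G^{s+1})_{Y'}\cong H[W]$ used in Section~2: the same bookkeeping, comparing which shadows appear, should let me read off that the relevant induced subgraph is literally an expansion of $G$ at a vertex set $W$. Once I know $\chi(G[W])=s+1=\chi(G)+1$, I would arrange (or observe) that $W$ can be taken to be a maximal independent set—the ``shadowing'' picture from Theorem~\ref{bfold}, where a $b$-fold coloring assigns color classes that are independent sets, naturally makes the relevant $W$ an independent set, and one enlarges it to a maximal one noting from the Remark after Lemma~\ref{independentlemma} that enlarging within an independent set does not decrease $\chi(G[W])$ below $s+1$ while expanding at extra independent vertices cannot raise it above—and then apply Lemma~\ref{independentlemma} to conclude $G[W]$ is critically $(s+1)$-chromatic.

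I expect the main obstacle to be the precise arithmetic that forces the chromatic number of the expansion to be exactly $s+1$ rather than merely $\ge s+1$, i.e., extracting from $s-1<\chi_f(G)\le s$ together with the existence of the optimal $b$ the two-sided bound $bs<\chi_{b+1}(G)\le(b+1)s$ (or whatever the correct indices turn out to be after aligning Theorem~\ref{bfold} with Theorem~\ref{criticalcorrespondence}), and then checking that the irreducible component this yields has all exponents in the valid range so that Theorem~\ref{criticalcorrespondence} actually applies. The second delicate point is the graph-isomorphism step identifying the induced subgraph of the expanded graph with $G[W]$ for $W$ a maximal independent set; this is the analogue of the ``by comparing the constructions \dots\ one can verify'' step in Section~2, and making it rigorous requires carefully matching shadows of shadows. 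Everything else—subadditivity, the definition of $\chi_f$, and the reduction to Lemma~\ref{independentlemma}—is routine.
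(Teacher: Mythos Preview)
Your proposal has a genuine gap in the arithmetic and a misuse of Theorem~\ref{criticalcorrespondence}. For the lower bound you need, you write that $\chi_{b+1}(G)\ge\lceil(b+1)\chi_f(G)\rceil$ together with $\chi_f(G)>s-1$ ``pushes $\chi_{b+1}(G)$ past $bs$ once one checks the arithmetic.'' But the arithmetic does not check: from $\chi_f(G)=\chi_b(G)/b$ and $\chi_b(G)\ge b(s-1)+1$ you only get $(b+1)\chi_f(G)\ge bs+(s-b)+1/b$, which exceeds $bs$ only when $s\ge b$. For the odd cycle $C_{2n+1}$ one has $s=3$ while the optimal $b$ equals $n$, so for $n\ge 4$ your inequality fails. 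More seriously, even granting some inequality on $\chi_{b+1}(G)$, Theorem~\ref{criticalcorrespondence} does not manufacture irreducible components from membership/non-membership of $(x_1\cdots x_n)^{d-b}$ in $J^d$; it only says which irreducible components \emph{already present} correspond to which critical subgraphs of $G^s$. There is no mechanism in your outline that produces an irreducible component with exponents in $\{s-1,s\}$, which is what you would need to recognize the critical subgraph as a $G[W]$ rather than merely an induced subgraph of some $G^m$.

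The paper's argument is structurally different: it is a proof by contradiction, assuming $\chi(G[W])=\chi(G)$ for \emph{every} maximal independent set $W$, and it never invokes Theorem~\ref{criticalcorrespondence}. The key tool you are missing is Lemma~\ref{technicallemma}, which converts the hypothesis $\chi(G[W])=\chi(G)$ into the membership $(x_1\cdots x_n)^{\chi_b(G')-b}/\mathbf{m}_W^b\in J(G)^{\chi_b(G')}$. Combining this (for the particular maximal independent set complementary to a minimal vertex cover appearing in a factorization of $(x_1\cdots x_n)^{\chi_c(G)-c}$) with Theorem~\ref{bfold} yields an upper bound on $\chi_{(c+1)b}(G)/((c+1)b)$ that, under the hypothesis $\chi(G)-1<\chi_f(G)$, is strictly less than $\chi_f(G)$---contradicting $\chi_f(G)\le\chi_a(G)/a$ for all $a$. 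Your reduction to Lemma~\ref{independentlemma} at the end is correct and is exactly how the paper finishes, but the route to finding the maximal independent set $W$ needs to go through Lemma~\ref{technicallemma} and a contradiction, not through irreducible components.
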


We will, in fact, prove a stronger result, by showing that we can
pick $W$ to be a maximal independent set, thus suggesting a stronger
version of Conjecture \ref{conjecture} holds (see Remark \ref{strongerconj}).
We need a technical algebraic result.

\begin{lemma}\label{technicallemma}
Let $W \subseteq V_G$ be any subset of vertices and set $G' = G[W]$.
Then
\[\frac{(x_1\cdots x_n)^{\chi_b(G') - b}}
{{\bf m}_{W}^{b}} \in J(G)^{\chi_b(G')} ~~\mbox{for any integer $b \geq 1$},\]
where ${\bf m}_{W} = \prod_{x_i \in W} x_i$.
\end{lemma}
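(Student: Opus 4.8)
The plan is to translate the $b$-fold coloring statement into an ideal-membership statement via Theorem~\ref{bfold}, and then transfer a $b$-fold coloring of $G' = G[W]$ back to information about $J(G)$. First I would set $d = \chi_b(G')$. By Theorem~\ref{bfold} applied to the graph $G'$, we have $(\prod_{v \in V_{G'}} v)^{d-b} \in J(G')^d$, where the product runs over all vertices of the expanded graph $G'$, that is, over the ``surviving'' vertices $x_i$ for $x_i \notin W$ together with the two shadows $x_{i,1}, x_{i,2}$ for each $x_i \in W$. The key observation is that there is a natural ring homomorphism (in fact a specialization) $\varphi$ from the polynomial ring on $V_{G'}$ to $R = k[x_1,\dots,x_n]$ sending $x_i \mapsto x_i$ for $x_i \notin W$ and sending both shadows $x_{i,1}, x_{i,2} \mapsto x_i$ for $x_i \in W$. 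I would check that $\varphi(J(G')) \subseteq J(G)$: a minimal vertex cover of $G'$ maps to a (not necessarily minimal) vertex cover of $G$, because every edge of $G$ either survives in $G'$ or is ``covered'' by the clique edge $\{x_{i,1},x_{i,2}\}$ replacing $x_i$; since $J(G)$ is generated by the monomials of minimal vertex covers and is the intersection of the $(x_i,x_j)$, containing a monomial supported on any vertex cover suffices. Hence $\varphi(J(G')^d) \subseteq J(G)^d$.

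Next I would compute $\varphi$ on the specific monomial $(\prod_{v \in V_{G'}} v)^{d-b}$. Under $\varphi$, each surviving variable $x_i$ (with $x_i \notin W$) contributes $x_i^{d-b}$, and for each $x_i \in W$ the two shadows together contribute $x_i^{2(d-b)}$. So the image is
\[
\left(\prod_{x_i \notin W} x_i^{d-b}\right)\left(\prod_{x_i \in W} x_i^{2(d-b)}\right)
= (x_1 \cdots x_n)^{d-b} \cdot \Big(\prod_{x_i \in W} x_i\Big)^{d-b}
= (x_1 \cdots x_n)^{d-b}\, {\bf m}_W^{d-b}.
\]
This lands in $J(G)^d$, but it is off by a factor of ${\bf m}_W^{d-b}$ from what we want; the target monomial is $(x_1\cdots x_n)^{d-b}/{\bf m}_W^{b}$. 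So the final step is to promote the membership of $(x_1\cdots x_n)^{d-b}\,{\bf m}_W^{d-b}$ in $J(G)^d$ to the membership of the \emph{smaller} monomial $(x_1\cdots x_n)^{d-b}/{\bf m}_W^b$ in the \emph{same} power $J(G)^d$. Here I would use the ``colon'' / divisibility behavior of monomial ideals together with the explicit intersection description $J(G) = \bigcap_{\{x_i,x_j\} \in E_G}(x_i,x_j)$: a monomial lies in $J(G)^d$ iff for every edge $\{x_i,x_j\}$ the sum of its $x_i$- and $x_j$-exponents is at least $d$. Starting from the monomial $\mu = (x_1\cdots x_n)^{d-b}\,{\bf m}_W^{d-b}$ which satisfies this, I want $\mu' = (x_1\cdots x_n)^{d-b}\,{\bf m}_W^{-b}$, i.e.\ I am \emph{lowering} the exponent on each $x_i \in W$ from $2(d-b)$ down to $(d-b)-b = d-2b$, a drop of $2(d-b) - (d-2b) = d$ per variable in $W$. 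For any edge $\{x_i,x_j\}$, at most one of $x_i, x_j$ lies in $W$ (since $W$ will be taken independent — indeed Lemma~\ref{independentlemma} and Theorem~\ref{specialcase} have us pick $W$ a maximal independent set), so the edge-sum drops by at most $d$ when passing from $\mu$ to $\mu'$; combined with a direct estimate I should recover the needed inequality. Actually the cleanest route is to note $\mu' \cdot {\bf m}_W^d = \mu \cdot (\text{something}) $... so I would instead argue directly: for an edge with, say, $x_i \in W$, $x_j \notin W$, the $\mu'$-exponent sum is $(d-2b) + (d-b) = 2d - 3b$, and I must show $2d - 3b \ge d$, i.e.\ $d \ge 3b$; for an edge with both endpoints outside $W$ the sum is $2(d-b) \ge d$ iff $d \ge 2b$. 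These bounds on $d = \chi_b(G')$ are not automatic, so the statement as literally written needs $\chi_b(G') \ge 3b$, which should follow because $G'$ contains the clique $\{x_{i,1}, x_{i,2}\}$ plus the structure of $G$ forcing $\chi_b(G') \ge \chi(G')\cdot b \ge \dots$; in any case I would verify the precise inequality $\chi_b(G')\ge 2b$ (and handle the $W$-incident edges using that $W$ is independent so the surviving endpoint carries a full $x_j^{d-b}$ while the clique edge $\{x_{i,1},x_{i,2}\}$ inside $J(G')^d$ forced $2(d-b)\ge d$ on the shadows to begin with).

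The main obstacle I expect is precisely this last bookkeeping step: getting from the ``honest'' image $(x_1\cdots x_n)^{d-b}{\bf m}_W^{d-b} \in J(G)^d$ to the sharper claim $(x_1\cdots x_n)^{d-b}/{\bf m}_W^{b} \in J(G)^d$ with no slack to spare. The resolution should come from looking more carefully at \emph{where} the membership $(\prod v)^{d-b} \in J(G')^d$ is ``tight'': the clique edges $\{x_{i,1},x_{i,2}\}$ for $x_i \in W$ force, for the monomial to be in every $(x_{i,1},x_{i,2})$ with multiplicity contributing to the $d$-th power, exponent sums $\ge d$ on those pairs, but they contribute \emph{nothing} to edges of $G$ once we specialize; so the $d-b$ exponents on the shadows are ``wasted'' on clique edges and can be partially reclaimed. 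Making this precise — probably by a monomial-by-monomial argument on the generators of $J(G')^d$, tracking which of the $d$ factors (each a vertex cover of $G'$) use a clique edge versus a genuine $G$-edge — is the technical heart, and it is exactly the kind of combinatorial accounting the theorem is designed to package. I would organize it so that the independence of $W$ (each edge of $G$ meets $W$ in at most one vertex) is the only structural fact used, keeping the lemma applicable in the generality stated.
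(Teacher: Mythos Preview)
Your specialization map $\varphi$ is a reasonable first move, but it throws away exactly the information you later need: sending both shadows to $x_i$ introduces extra factors of $\mathbf{m}_W$ that you then cannot remove. The decisive error is the criterion you invoke to remove them. The statement ``a monomial lies in $J(G)^d$ iff for every edge $\{x_i,x_j\}$ the sum of its $x_i$- and $x_j$-exponents is at least $d$'' is false: that condition characterizes the $d$-th \emph{symbolic} power $J(G)^{(d)}=\bigcap_{e\in E_G}(x_i,x_j)^d$, not the ordinary power $J(G)^d$, and these differ for most nonbipartite graphs (already for the triangle, $x_1x_2x_3\in J^{(2)}\setminus J^2$). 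So even if your exponent inequalities had closed---you yourself notice they require $d\ge 3b$, which is not automatic---and even granting independence of $W$ (which the lemma does \emph{not} assume), your argument would only place the monomial in the symbolic power.

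The factor-tracking idea you gesture at in your last paragraph is the correct route, and it is precisely what the paper does. One writes $(\prod_{v\in V_{G'}}v)^{d-b}=m_1\cdots m_d\cdot M$ with each $m_j$ a minimal generator of $J(G')$, and splits $m_j=m_{j,x}m_{j,y}$ into the parts supported on the original variables and on the second shadows $y_i$. Each $m_{j,x}$ is already a vertex cover of $G$, so $(x_1\cdots x_n)^{d-b}=m_{1,x}\cdots m_{d,x}M_x\in J(G)^d$ with no stray $\mathbf{m}_W$ factor. The key observation is this: if $y_i\nmid m_j$, then $m_j$ must cover every $G'$-edge incident to $y_i$, which forces $x_i$ together with all of $N_G(x_i)$ into $m_j$; hence $m_{j,x}/x_i$ is still a vertex cover of $G$. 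Since $y_i$ has total exponent $d-b$ in the product and each $m_j$ is square-free, $y_i$ is absent from at least $b$ of the $m_j$; so for each $x_i\in W$ one may strip an $x_i$ from $b$ of the factors and remain in $J(G)^d$. This delivers the claimed membership directly, with no symbolic-power detour and no independence hypothesis on $W$.
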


\begin{proof}
For simplicity, let $W = \{x_1,\ldots,x_t\}$,
and we assume that 
\[V_{G'} = \{x_1,y_1,\ldots,x_t,y_t,x_{t+1},\ldots,x_n\},\] i.e.,
we use $x_i$ and $y_i$ to represent the shadows of $x_i$.  
By Theorem \ref{bfold}, we have
\[(x_1\cdots x_ny_1\cdots y_t)^{\chi_b(G')-b} \in J(G')^{\chi_b(G')}.\]
This means that 
\[(x_1\cdots x_ny_1\cdots y_t)^{\chi_b(G')-b} = m_1\cdots m_{\chi_b(G')}M
~\mbox{for some monomial $M$,}\]
where the $m_i$ are minimal generators of $J(G')$.
We can write each $m_i$ as $m_i = m_{i,x}m_{i,y}$ where $m_{i,x}$
is a monomial in $\{x_1,\ldots,x_n\}$ and $m_{i,y}$ is a monomial
in the $y_i$'s.  

We note that each $m_{i,x} \in J(G)$;  to see this, observe that the support of $m_i$
that is contained in $\{x_1,\ldots,x_n\}$ would cover the
vertices of $G$.  
Thus
\[(x_1\cdots x_n)^{\chi_b(G')-b} = m_{1,x}\cdots m_{\chi_b(G'),x}M_x \in 
J(G)^{\chi_b(G')},\]
where $M = M_xM_y$.
Note that if $y_i \nmid m_j$, then $x_i$ and $N(x_i)$, the neighbors of $x_i$, must divide
$m_j$, since the neighbors of $y_i$ are $x_i$ and $N(x_i)$.  In
other words, $x_i$ and $N(x_i)$ both divide $m_{j,x}$.  But then
$m_{j,x}$ is not a minimal vertex cover since we can remove $x_i$
and still have a vertex cover.  That is, $m_{j,x}/x_i \in J(G)$.

Now, for $i=1,\ldots,t$, the variable
$y_i$ is missing from at least $b$ of $\{m_1,\ldots,m_{\chi_b(G')}\}$ because
each of these monomials is square-free.
This means that for $i=1,\ldots,t$, the variable $x_i$ divides (at least) $b$ of the 
elements of $\{m_{1,x},\ldots,m_{\chi_b(G'),x}\}$,
and the resulting monomials still correspond to vertex covers.
Hence
\[\frac{(x_1\cdots x_n)^{\chi_b(G')-b}}{(x_1 \cdots x_t)^b} 
=\frac{m_{1,x}\cdots m_{\chi_b(G'),x}M_x}{(x_1 \cdots x_t)^b} \in J(G)^{\chi_b(G')}.\]
\end{proof}

\noindent{\bf Proof of Theorem \ref{specialcase}.}
By Lemma \ref{independentlemma}, it suffices to find a
maximal independent set $W$ such that $\chi(G[W]) = s+1$.  Suppose,
for a contradiction, that for every maximal
independent set $W$, $\chi(G[W]) = \chi(G)$.  

Fix any maximal independent set $W$,
and let  $G' = G[W]$.  There exists an integer $b$
such that $\chi_f(G') = \frac{\chi_b(G')}{b}$.  Furthermore, by Lemma \ref{technicallemma}
\[\frac{(x_1\cdots x_n)^{\chi_b(G')-b}}{{\bf m}_W^b} \in J(G)^{\chi_b(G')}.\]
(Note that the value of $b$ will depend upon the choice of $W$; the fact
remains that for every maximal independent set $W$, there exists
an integer $b$ so that the above monomial belongs to some power of
$J(G)$.)

Now we consider $\chi_f(G)$.  Again, there exists an integer
$c$ such that $\chi_f(G) = \frac{\chi_c(G)}{c}$.  We thus have
$(x_1\cdots x_n)^{\chi_c(G)-c} \in J(G)^{\chi_c(G)}$ by
Theorem \ref{bfold}.
This means that there exist minimal generators of $J(G)$ such that
\[(x_1\cdots x_n)^{\chi_c(G) -c} = m_{W_1}m_{W_2} \cdots m_{W_{\chi_c(G)}}M.\]
Since $m_{W_1}$ corresponds to a minimal vertex cover $W_1$, the 
complement $V_G \setminus W_1$ is a maximal
independent set of $G$.  Hence, by the discussion in the previous
paragraph, we also have 
\[(x_1 \cdots x_n)^{\chi_b(G') - 2b}m_{W_1}^b \in J(G)^{\chi_b(G')},\]
where $\frac{\chi_b(G')}{b} = \chi_f(G')$ and $G'$ is the expansion
of the independent set $V \setminus W_1$.

When we combine together the above results, we get:
\begin{eqnarray*}
(x_1\cdots x_n)^{\chi_b(G')-2b}m_{W_1}^b(m_{W_2}\cdots m_{W_{\chi_c(G)}}M)^b 
&\in& J(G)^{\chi_b(G')+b(\chi_c(G)-1)}\\
(x_1\cdots x_n)^{\chi_b(G')-2b + b\chi_c(G) -cb} &\in & 
J(G)^{\chi_b(G')+b\chi_c(G)-b}\\
(x_1\cdots x_n)^{(\chi_b(G') + b\chi_c(G) - b)-(c+1)b} &\in & 
J(G)^{\chi_b(G')+b\chi_c(G)-b}
\end{eqnarray*}
The last expression, coupled with Theorem \ref{bfold}, implies
that $(c+1)b$-fold chromatic number of $G$ is bounded above by
\[\chi_{(c+1)b}(G) \leq \chi_b(G') + b\chi_c(G) - b.\]
If we divide both sides of the equation by $(c+1)b$ we then get
\begin{eqnarray*}
\frac{\chi_{(c+1)b}(G)}{(c+1)b} &\leq& \frac{\chi_b(G')}{(c+1)b} + 
\frac{b\chi_c(G)}{(c+1)b} - \frac{b}{(c+1)b} 
=  \frac{\chi_f(G')+ \chi_c(G) - 1}{c+1},
\end{eqnarray*}
where we use the fact that $\chi_f(G') = \frac{\chi_b(G')}{b}$.

We have the following inequalities and equalities, where the first inequality
is our hypothesis, and the last equality comes from
our assumption that $\chi(G[W]) = \chi(G)$ for every maximal
independent set:
\[\chi(G)-1 < \chi_f(G) \leq \chi_f(G') \leq \chi(G') = \chi(G).\]
So, $\chi_f(G') < \chi_f(G)+1$, whence
\begin{eqnarray*}
\frac{\chi_f(G')+ \chi_c(G)-1}{c+1} & < & \frac{\chi_f(G)+1 + \chi_c(G) -1}{c+1}
 =  \frac{\chi_f(G)+ \chi_c(G)}{c+1} \\
&=& \frac{\chi_c(G)/c + \chi_c(G)}{c+1} =  \frac{\frac{\chi_c(G) + c\chi_c(G)}{c}}{c+1} \\
& = & \frac{(c+1)\frac{\chi_c(G)}{c}}{c+1} 
 =  \frac{\chi_c(G)}{c} = \chi_f(G).
\end{eqnarray*}
For any integer $a$, we have $\chi_f(G) \leq \frac{\chi_a(G)}{a}$.
So, if we put our inequalities together, we get 
\[\chi_f(G) \leq \frac{\chi_{(c+1)b}(G)}{(c+1)b} < \chi_f(G)\]
which gives our desired contradiction.  Hence there exists
a maximal independent set such that when we expand it, we get 
a graph whose chromatic number goes up.
\qed

\begin{corollary}
Conjecture \ref{conjecture} holds for the following critical graphs:
cliques, odd holes, and odd antiholes.
\end{corollary}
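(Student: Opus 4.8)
The plan is, for each of the three families, to verify the hypothesis $\chi(G)-1 < \chi_f(G) \le \chi(G)$ of Theorem \ref{specialcase} and then invoke that theorem directly. Since cliques, odd holes and odd antiholes are all vertex-critical (removing a vertex from $\K_s$ leaves $\K_{s-1}$; removing a vertex from an odd hole $C_{2k+1}$ leaves a path, which is $2$-colorable; removing a vertex from an odd antihole $\overline{C_{2k+1}}$ leaves $\overline{P_{2k}}$, which admits a perfect matching and whose color classes have at most two vertices, hence is $k$-colorable), the corollary reduces entirely to identifying $\chi_f$ in each case.

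For $\K_s$ we have $\chi_f(\K_s) = s = \chi(\K_s)$, so the hypothesis holds trivially; alternatively, this case was already settled in the introduction, where $\K_s$, being perfect, expands to $\K_{s+1}$. For an odd hole $C_{2k+1}$ with $k \ge 2$ we use $\chi(C_{2k+1}) = 3$ together with the classical value $\chi_f(C_{2k+1}) = 2 + \frac{1}{k}$, realized by the $k$-fold coloring of $C_{2k+1}$ with $2k+1$ colors that moves consecutive blocks of $k$ colors around the cycle; since $k \ge 2$ this gives $2 < 2 + \frac{1}{k} \le \frac{5}{2} < 3$. For an odd antihole $\overline{C_{2k+1}}$ with $k \ge 2$, the graph is vertex-transitive with independence number $\alpha(\overline{C_{2k+1}}) = \omega(C_{2k+1}) = 2$, so $\chi_f(\overline{C_{2k+1}}) = \frac{2k+1}{2} = k + \frac{1}{2}$, while $\chi(\overline{C_{2k+1}}) = k+1$ since every color class is an independent set and hence contains at most two vertices; thus $k < k + \frac{1}{2} \le k+1$.

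With these inequalities in place, Theorem \ref{specialcase} applies to each of the three graphs and produces a (maximal independent) set $W$ for which $G[W]$ is critically $(\chi(G)+1)$-chromatic, which is exactly the conclusion of Conjecture \ref{conjecture} for that graph. The only substantive input is the two fractional chromatic number computations, both of which are entirely standard (see, e.g., \cite{SU}), so I do not expect any genuine obstacle; the sole point worth a second look is simply that each of these three graphs is indeed critical, so that Theorem \ref{specialcase} is applicable.
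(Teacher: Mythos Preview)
Your proof is correct and follows essentially the same approach as the paper: in each case you verify the inequality $\chi(G)-1 < \chi_f(G) \le \chi(G)$ and then invoke Theorem~\ref{specialcase}. You supply more detail than the paper (checking criticality and justifying the values of $\chi_f$), but the argument is the same.
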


\begin{proof}  It suffices to show that for each type of graph, $\chi(G) -1 < \chi_f(G) 
\leq \chi(G)$.  For cliques, this is immediate since $\chi_f(G) = \chi(G)$.
When $G = C_{2n+1}$ is an odd hole,  $\chi_f(C_{2n+1}) = 2 + \frac{1}{n}$ and $\chi(G) = 3$,
so the result holds.  Finally, when $G = C_{2n+1}^c$, the chromatic number
of $G$ is $n+1$, while $\chi_f(G) = n+\frac{1}{2}$, so again the result holds.
\end{proof}

\begin{remark} Critically $3$-chromatic graphs are induced odd cycles. Let $C$ be an induced odd cycle on the vertices $\{x_1, \dots, x_{2r+1}\}$ for some $r \ge 1$. Let $W = \{x_2, x_4, \dots, x_{2r}\}$. Then it is not hard to see that $\chi(C[W]) = 4 = \chi(C) + 1$. This says that Conjecture \ref{conjecture} holds for $s = 3$. As a consequence, for any cover ideal $J = J(G)$ of a graph $G$, we always have 
$$\Ass(R/J^2) \subseteq \Ass(R/J^3).$$
In \cite{FHVT1}, we characterize elements of $\Ass(R/J^2)$. Thus, all primes of the form $(x_i,x_j)$, where $\{x_i,x_j\}$ is an edge in $G$, and $P = (x_{i_1}, \dots, x_{i_r})$, where $r$ is odd and the induced subgraph of $G$ on $\{x_{i_1}, \dots, x_{i_r}\}$ is an odd cycle, belong to $\Ass(R/J^3)$.
\end{remark}


\end{document}